\documentclass[11pt]{amsart}	

\usepackage{fullpage}

\usepackage{amssymb, amsmath, amsthm, color, amsfonts, caption, url, longtable, mathtools, hyperref}

\newtheorem{theorem}{Theorem}
\numberwithin{theorem}{section}
\newtheorem{corollary}[theorem]{Corollary}
\newtheorem{lemma}[theorem]{Lemma}

\newtheorem{remark}[theorem]{Remark}

\newcommand{\modgroup}{\operatorname{SL}_2(\mathbb Z)}
\newcommand\Tstrut{\rule{0pt}{2.6ex}}  
\newcommand\Bstrut{\rule[-0.9ex]{0pt}{0pt}}

\newcommand{\N}{\mathbb{N}}
\newcommand{\Z}{\mathbb{Z}}
\newcommand{\C}{\mathbb{C}}
\newcommand{\Q}{\mathbb{Q}}

\newcommand{\G}{\Gamma}

\newcommand{\SL}{\rm{SL}_2(\mathbb{Z})}
\newcommand{\leg}[2]{\big(\frac{#1}{#2}\big)}
\newcommand{\abcdmatrix}{\begin{psmallmatrix}a&b\\c&d\end{psmallmatrix}}

\newcommand{\spc}{\hspace{4pt} }

\newcommand{\rmhs}{\color{black}}


\begin{document}

\title{Eta-quotients of Prime or Semiprime Level and Elliptic Curves}

\author{Michael Allen, Nicholas Anderson, Asimina Hamakiotes, Ben Oltsik, Holly Swisher}


\subjclass[2010]{11F20, 11F37, 11G05}

\keywords{eta-quotients, modular forms, elliptic curves}

\thanks{This work was supported by NSF grant DMS-1359173.}

\maketitle

\begin{abstract}
From the Modularity Theorem proven by Wiles, Taylor, et al, we know that all elliptic curves are modular.  It has been shown by Martin and Ono exactly which are represented by eta-quotients, and some examples of elliptic curves represented by modular forms that are linear combinations of eta-quotients have been given by Pathakjee, RosnBrick, and Yoong.  

In this paper, we first show that eta-quotients which are modular for any congruence subgroup of level $N$ coprime to $6$ can be viewed as modular for $\Gamma_0(N)$.  We then categorize when even weight eta-quotients can exist in $M_k (\G_1(p))$ and $M_k (\G_1(pq))$, for distinct primes $p,q$.  We conclude by providing some new examples of elliptic curves whose corresponding modular forms can be written as a linear combination of eta-quotients, and describe an algorithmic method for finding additional examples. 
\end{abstract}


\section{Introduction and Statement of Results}\label{intro}

Dedekind's eta-function $\eta(\tau)$, defined for $\tau \in\mathbb{H}:=\{\tau \in \C \mid  \rm{Im}(\tau)>0\}$ by 
\[
\eta(\tau) := q^{\frac{1}{24}}\prod_{n=1}^\infty (1-q^n),
\]
where $q:=e^{2 \pi i\tau}$, is arguably the most well-known half-integral weight modular form.  Its modular transformation properties for a matrix $A = \left( \begin{smallmatrix}  a & b \\ c & d \end{smallmatrix} \right) \in \SL$ are given by
\begin{equation}
\eta(A\tau)=v(A)(c\tau+d)^{\frac{1}{2}}\eta(\tau),
\end{equation}
where
\begin{equation}
v(A):=
\begin{cases}
\leg{d}{|c|}e^{\frac{i\pi}{12}\big((a+d)c+bd(c^2-1)-3c\big)}& \text{if } c\equiv 1\pmod{2}\\ \leg{c}{d}e^{\frac{i\pi}{12}\big((a+d)c+bd(c^2-1)+3d-3-3cd\big)}&\text{if }d\equiv1\pmod{2}.
\end{cases}
\end{equation}

Dedekind's eta-function has been featured prominently in work motivated by Ramanujan's study of the integer partition counting function $p(n)$ (see \cite{Ono} and \cite{AO} for example), as well as in the representation theory of the Monster group, studied by Conway and Norton \cite{CN}, Borcherds \cite{Borcherds}, and others.  Due to its expression as a simple infinite product, it is easy to compute expansions numerically.  Thus it is useful when a modular form can be expressed in terms of products or quotients of $\eta(\tau)$. By \emph{eta-quotient of level $N$}, we mean a function of the form
\[
f(\tau) = \prod_{\delta\mid N}\eta^{r_\delta}(\delta\tau).
\]
Work on various classifications of eta-quotients has been of interest, see in particular work of 
Dummit, Kisilevsky, and McKay \cite{DKMK}, Martin \cite{Martin}, and Lemke Oliver \cite{LO}.  Moreover, the famous work of Wiles and Taylor  \cite{Wiles, TaylorWiles} in proving Fermat's Last Theorem, and in particular the Shimura-Taniyama conjecture, showed that elliptic curves were attached to modular forms, and thus the question of when these modular forms can be expressed in terms of eta-quotients is a natural one.  

\begin{theorem}[Modularity Theorem \cite{DS}]\label{modularityTheorem} Every elliptic curve $E$ over $\mathbb Q$ with conductor $N$ has an $L$-function 
\[
L(E, s) = \sum_{n=1}^\infty \frac{a_E(n)}{n^s}
\]
such that the Fourier series
\[
f(\tau) = \sum_{n=1}^\infty a_E(n)q^n,\hspace{5pt} q = e^{2\pi i \tau},\hspace{5pt} \tau \in \mathbb H
\]
represents a level $N$ cusp form of weight 2.
\end{theorem}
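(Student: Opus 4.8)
The plan is not to reprove this monumental result from scratch---its proof, due to Wiles and Taylor--Wiles in the semistable case and completed by Breuil, Conrad, Diamond, and Taylor in general, occupies hundreds of pages---but to indicate the strategy by which modularity is established, following the treatment cited in \cite{DS}. The first move is to translate the analytic statement into one about Galois representations. To an elliptic curve $E/\mathbb{Q}$ of conductor $N$ and a prime $\ell$ one attaches the $\ell$-adic Tate module, producing a continuous representation $\rho_{E,\ell}\colon G_{\mathbb{Q}}\to \mathrm{GL}_2(\mathbb{Z}_\ell)$ that is odd, unramified outside $N\ell$, and has cyclotomic determinant; its mod-$\ell$ reduction is denoted $\bar\rho_{E,\ell}\colon G_{\mathbb{Q}}\to \mathrm{GL}_2(\mathbb{F}_\ell)$. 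One shows that $E$ satisfies the conclusion of the theorem precisely when $\rho_{E,\ell}$ is \emph{modular}, i.e.\ arises from a weight-$2$ Hecke eigenform, so it suffices to prove modularity of this representation for a single convenient $\ell$.

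Next I would establish \emph{residual modularity}, that $\bar\rho_{E,\ell}$ itself comes from a modular form, working at $\ell=3$. Because $\mathrm{GL}_2(\mathbb{F}_3)$ is solvable, the Langlands--Tunnell theorem on the strong Artin conjecture for two-dimensional solvable representations (proved via cyclic base change) shows $\bar\rho_{E,3}$ is modular. When $\bar\rho_{E,3}$ is absolutely irreducible one proceeds directly; otherwise one applies the \emph{$3$--$5$ switch}, producing an auxiliary curve $E'$ with $\bar\rho_{E',5}\cong\bar\rho_{E,5}$ but with $\bar\rho_{E',3}$ irreducible, deducing modularity of $\bar\rho_{E,5}$ and continuing at $\ell=5$.

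The technical heart is a \emph{modularity lifting theorem}: if $\bar\rho_{E,\ell}$ is modular (and suitably irreducible), then every lift with prescribed local behavior---flat or ordinary at $\ell$, and of the correct type at the bad primes---is again modular. This is proved by comparing a universal deformation ring $R$ parametrizing such lifts with the Hecke algebra $T$ acting on the corresponding space of modular forms; there is a natural surjection $R\twoheadrightarrow T$, and the goal is to show it is an isomorphism. The \emph{main obstacle}, and the true content of the argument, is exactly this identification $R\cong T$: it is achieved by Wiles's numerical criterion together with the Taylor--Wiles patching method, in which one adjoins carefully chosen auxiliary ``Taylor--Wiles primes'' to enlarge both rings in a controlled way and then passes to a limit, reducing the problem to a complete-intersection statement in commutative algebra.

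Finally, applying the lifting theorem to $\rho_{E,\ell}$ itself shows it is modular, yielding a weight-$2$ newform of level $N$ whose Hecke eigenvalues $a_f(n)$ match the coefficients $a_E(n)$; equality of the associated $L$-functions then gives the stated Fourier expansion. The semistable conductors were handled by Wiles \cite{Wiles} and Taylor--Wiles \cite{TaylorWiles}, and the remaining local deformation conditions---principally the potentially Barsotti--Tate cases at additive primes---were resolved by Breuil, Conrad, Diamond, and Taylor to obtain the theorem for every elliptic curve over $\mathbb{Q}$.
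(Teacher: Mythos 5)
The paper does not prove this theorem at all --- it is quoted verbatim from \cite{DS} and credited in the introduction to Wiles and Taylor--Wiles \cite{Wiles, TaylorWiles}, exactly the literature your sketch summarizes, so there is no internal proof to compare against. Your outline (Tate-module Galois representations, Langlands--Tunnell residual modularity at $\ell=3$, the $3$--$5$ switch, and the identification $R\cong T$ via Wiles's numerical criterion and Taylor--Wiles patching, completed by Breuil--Conrad--Diamond--Taylor) is an accurate summary of the standard proof strategy and is entirely consistent with the sources the paper cites.
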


In light of Theorem \ref{modularityTheorem}, Martin and Ono \cite{MO} classified all eta-quotients which are weight $2$ newforms, as well as their associated elliptic curves.  It is natural to ask when elliptic curves are associated to modular forms that can be written as linear combinationa of eta-quotients.  Recently, Pathakjee, RosnBrick, and Toong \cite{PRY} demonstrated four such examples, utilizing spaces of cusp forms which are spanned by eta-quotients.  Further work on when spaces of modular forms are spanned by eta-quotients has been done by Rouse and Webb \cite{RW}, Arnold-Roksandich, James, and Keaton \cite{ARJK}, and Kilford \cite{Kilford}, for example.

Given a congruence subgroup group $\G \subseteq \SL$, we use the notation $S_k(\G)$, $M_k(\G)$, and $M_k^!(\G)$ to denote the complex vector spaces of weight $k$ cusp forms, holomorphic modular forms, and weakly holomorphic modular forms, respectively.  When $\G=\G_0(N)$ for a positive integer $N$, then we write $S_k(\G_0(N), \chi)$, $M_k(\G_0(N), \chi)$, and $M_k^!(\G_0(N), \chi)$ to denote the spaces of weight $k$ cusp forms, holomorphic modular forms, and weakly holomorphic modular forms, respectively, with Nebentypus $\chi$.

In this paper, we first show that eta-quotients which are modular for any congruence subgroup of level $N$ can be viewed as modular for $\Gamma_0(N)$.  We then categorize when even weight eta-quotients can exist in $M_k (\G_1(p))$ and $M_k (\G_1(pq))$, for distinct primes $p,q$.  We conclude by providing some new examples of elliptic curves whose corresponding modular forms can be written as a linear combination of eta-quotients, and describe an algorithmic method for finding additional examples. 

\subsection{Viewing eta-quotients over $\G_0(N)$.}
In order to state our first result, we review a few key theorems about the modularity of eta-quotients and their orders of vanishing at cusps.

The following well-known theorem originating in work of Newman \cite{Newman, Newman2} as well as Gordon and Hughes \cite{GH}, provides explicit modularity properties with respect to $\G_0(N)$ for eta-quotients of specific shapes.


\begin{theorem}[\cite{WebOf}, Theorem 1.64] \label{1.64}
Let $f$ be the eta-quotient of level $N$ given by
\[ \eta_g(\tau)=\prod_{\delta\mid N}\eta^{r_\delta}(\delta\tau).\]
If $\eta_g$ satisfies both
\begin{align*}
\sum_{\delta\mid N} \delta r_\delta&\equiv 0 \pmod{24}, \\
\sum_{\delta\mid N} \frac{N}{\delta}r_\delta&\equiv 0\pmod{24},
\end{align*}
then for $k=\frac{1}{2}\sum_{\delta\mid N}r_\delta $ and $\chi(d)=\leg{(-1)^ks}{d}$ where $s=\prod_{\delta\mid N}\delta^{r_\delta}$, we have $f \in M_k^!(\Gamma_0(N),\chi)$, i.e for all $M=\abcdmatrix\in \Gamma_0(N)$
\[f(M\tau)=\chi(d)(c\tau+d)^k f(\tau).\]
\end{theorem}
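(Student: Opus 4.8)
The plan is to derive the transformation law directly from the modular transformation~(1) of $\eta$ and its explicit multiplier~(2), reducing everything to a computation of the resulting multiplier system. The crucial observation is that for each $\delta\mid N$ and any $M=\abcdmatrix\in\G_0(N)$, the point $\delta\cdot M\tau$ is an $\SL$-translate of $\delta\tau$. Indeed, since $N\mid c$ and $\delta\mid N$ we have $\delta\mid c$, so
\[
A_\delta:=\begin{psmallmatrix} a & b\delta \\ c/\delta & d \end{psmallmatrix}\in\SL,\qquad \delta(M\tau)=A_\delta(\delta\tau),
\]
the determinant being $ad-b\delta\cdot(c/\delta)=ad-bc=1$. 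Applying~(1) gives $\eta(\delta M\tau)=v(A_\delta)\big((c/\delta)(\delta\tau)+d\big)^{1/2}\eta(\delta\tau)=v(A_\delta)(c\tau+d)^{1/2}\eta(\delta\tau)$. Raising to the power $r_\delta$ and taking the product over $\delta\mid N$, the automorphy factors combine to $(c\tau+d)^{\frac12\sum_\delta r_\delta}=(c\tau+d)^k$ and the eta factors reassemble into $\eta_g(\tau)$, yielding
\[
\eta_g(M\tau)=\Big(\prod_{\delta\mid N}v(A_\delta)^{r_\delta}\Big)(c\tau+d)^k\,\eta_g(\tau).
\]
It therefore remains only to identify the product of multipliers with $\chi(d)$.

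The second and main step is to show $\prod_{\delta\mid N}v(A_\delta)^{r_\delta}=\leg{(-1)^k s}{d}$ using the explicit formula~(2). Each $v(A_\delta)$ factors as a Jacobi/Kronecker symbol times a $24$th root of unity $e^{i\pi(\cdots)/12}$, and I would treat these two contributions separately. The root-of-unity part is precisely where the two hypotheses enter: expanding the exponent for $A_\delta$ (whose relevant entries are $c/\delta$ and $b\delta$) and summing $r_\delta$ times over $\delta$, the terms produce, after writing $c/\delta=(c/N)(N/\delta)$ and $c^2/\delta=Nm^2(N/\delta)$ with $c=Nm$, expressions proportional to $\sum_{\delta}\tfrac N\delta r_\delta$ and $\sum_{\delta}\delta r_\delta$. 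Both vanish modulo $24$ by hypothesis, so the total exponential contribution is an integer multiple of $2\pi i$ and hence trivial. For the symbol part (taking $d$ odd, which is the case relevant to $\chi$), one has $\prod_\delta\leg{c/\delta}{d}^{r_\delta}$, and I would apply multiplicativity together with quadratic reciprocity to flip and recombine these symbols, collapsing them into a single symbol whose numerator is $(-1)^k s=(-1)^k\prod_\delta\delta^{r_\delta}$, matching $\chi(d)$; the sign $(-1)^k$ emerges from the reciprocity flips and the determinant relation.

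The holomorphy claims needed for membership in $M_k^!(\G_0(N),\chi)$ are comparatively routine. Since $\eta$ is holomorphic and nonvanishing on $\mathbb H$, the quotient $\eta_g$ is holomorphic on $\mathbb H$ irrespective of the signs of the $r_\delta$, and each $\eta(\delta\tau)$ has a $q$-expansion beginning with a fractional power of $q$, so $\eta_g$ is meromorphic at every cusp with at worst a pole, which is exactly the weakly holomorphic condition.

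I expect the genuine obstacle to be the bookkeeping in the multiplier computation rather than the structural reduction, which is essentially formal. The formula~(2) is given by cases according to the parity of $c$ or $d$, and since the entries $c/\delta$ and $b\delta$ scale oppositely in $\delta$, the applicable case can switch as $\delta$ ranges over the divisors of $N$; care is also needed when $d$ is even versus odd and when passing between symbol conventions. The two congruence conditions are tuned exactly to annihilate the exponential factors, and verifying this cancellation explicitly—while confirming that the leftover sign is precisely $(-1)^k$—is the delicate heart of the argument.
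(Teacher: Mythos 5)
The paper itself gives no proof of this statement---it is quoted from Ono's book \cite{WebOf}, and the result goes back to Newman, Gordon--Hughes, and Ligozat---so your proposal can only be measured against the standard argument, and your structural first step is exactly that argument: since $\delta\mid c$, the matrix $A_\delta=\begin{psmallmatrix} a & b\delta\\ c/\delta & d\end{psmallmatrix}\in\SL$ satisfies $\delta(M\tau)=A_\delta(\delta\tau)$, the automorphy factors combine to $(c\tau+d)^k$, and the whole theorem reduces to the identity $\prod_{\delta\mid N}v(A_\delta)^{r_\delta}=\chi(d)$. Your holomorphy remarks are also fine. One small correction along the way: for a fixed $M$ the applicable case of (2) never switches as $\delta$ varies (if $c$ is odd then every $c/\delta$ is odd; if $c$ is even then $d$ is odd, since $ad-bc=1$ forces $\gcd(c,d)=1$), though you do need to dispose separately of $c=0$, i.e.\ $M=\pm T^n$ with $T=\begin{psmallmatrix}1&1\\0&1\end{psmallmatrix}$, where the first congruence gives $f(\tau+1)=f(\tau)$, and of even $d$, which one reduces to the odd-$d$ case by translating by $T$.

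The genuine gap is in your accounting of the multiplier in the very case you chose, $d$ odd: the claim that the two congruence hypotheses make ``the total exponential contribution\dots trivial'' is false there, and the claim that $(-1)^k$ ``emerges from the reciprocity flips'' is both unnecessary and unworkable. Explicitly, for $A_\delta$ the exponent in (2) is
\[
\frac{i\pi}{12}\Big((a+d)\tfrac{c}{\delta}+b\delta d\big(\tfrac{c^2}{\delta^2}-1\big)+3d-3-3\tfrac{c}{\delta}d\Big),
\]
and after weighting by $r_\delta$ and summing over $\delta\mid N$, the terms proportional to $N/\delta$ (namely $(a+d)c/\delta$, $bdc^2/\delta$, and $-3cd/\delta$, writing $c=Nm$) and to $\delta$ (namely $-b\delta d$) are indeed killed modulo $24$ by the two hypotheses---but the constant $3d-3$ is weighted by $\sum_\delta r_\delta=2k$ and survives, contributing $e^{i\pi k(d-1)/2}=i^{k(d-1)}=\leg{(-1)^k}{d}$ for odd $d$. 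That surviving factor is precisely the source of $(-1)^k$. Meanwhile the symbol part needs no reciprocity at all: since $\gcd(\delta,d)=1$, one has $\leg{c/\delta}{d}=\leg{c/\delta}{d}\leg{\delta^2}{d}=\leg{c\delta}{d}$, whence $\prod_\delta\leg{c/\delta}{d}^{r_\delta}=\leg{c}{d}^{2k}\leg{s}{d}=\leg{s}{d}$ for $c\neq 0$. (Reciprocity flips would instead produce symbols $\leg{d}{c/\delta}$ with $\delta$-dependent moduli that do not recombine into a single symbol with numerator $s$; the picture you describe---exponential fully annihilated, sign coming from reciprocity---belongs to the \emph{other} branch, $c$ odd, where the exponent $(a+d)\tfrac c\delta+b\delta d(\tfrac{c^2}{\delta^2}-1)-3\tfrac c\delta$ really is killed by the hypotheses and the symbols are $\leg{d}{|c/\delta|}$.) As written, your plan comes out off by exactly the factor $\leg{(-1)^k}{d}$; a quick sanity check is $f(\tau)=\eta(\tau)\eta(23\tau)$, which satisfies both congruences with $k=1$ and $s=23$, and whose character is the classical $\leg{-23}{d}$, not the $\leg{23}{d}$ that your accounting would produce.
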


\begin{remark} \label{1.64reduct}
 In the case where $\gcd(N,6)=1,$ the two conditions above are equivalent.  This is because any $\delta \mid N$  must satisfy $\gcd(N,24)=1$, and hence $\delta$ is its own inverse modulo $24$.  Thus,
 \[
 \sum_{\delta \mid N} \frac{N}{\delta}r_\delta \equiv N \sum_{\delta \mid N} \delta r_\delta \pmod{24}.
 \]
 As $\gcd(N,24)=1$, $N$ is not a zero divisor in $\Z/24\Z,$ so 
 \[
 N\sum_{\delta \mid N} \delta r_\delta \equiv 0 \pmod{24} \spc \text{ if and only if } \spc \sum_{\delta \mid N} r_\delta \equiv 0 \pmod{24}.
 \]
\end{remark}

The next theorem originating in work of Ligozat \cite{Ligozat}, and further appearing in work of Biagioli \cite{Biagioli} and Martin \cite{Martin}, provides a mechanism for calculating the relative orders of vanishing at cusps for eta-quotients satisfying Theorem \ref{1.64}. 

Recall that if $h$ is the width of the cusp $r$ with respect to the group $\G$, then the \emph{order of vanishing relative to the group $\G$} of a modular form $f$ at the cusp $r$ is given by $v_{\G}\left(f, \frac{c}{d}\right) = h \cdot \rm{inv}_r(f)$, where $\rm{inv}_r(f)$ is the invariant order of vanishing of $f$ at $r$, and is always integral.  We note that for the cusp at infinity, we always have $v_{\G}\left(f, \frac{c}{d}\right) = \rm{inv}_r(f)$. 


\begin{theorem}[\cite{WebOf}, Theorem 1.65] \label{1.65}
Let $c$, $d$, and $N$ be positive integers with $d\mid N$ and $\gcd(c,d)=1$. Then if $f(\tau)$ is an eta-quotient satisfying the conditions given in Theorem \ref{1.64} for $N$, then the order of vanishing for $f(\tau)$ at the cusp $\frac{c}{d}$  relative to $\G_0(N)$ is
\[
v_{\frac{c}{d}} := v_{\G_0(N)}\left(f, \frac{c}{d}\right) = \frac{N}{24}\sum_{\delta\mid N}\frac{\gcd(d,\delta)^2r_\delta}{\gcd(d,\frac{N}{d})d\delta}.
\]
\end{theorem}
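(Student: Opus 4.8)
The plan is to move the cusp $\frac{c}{d}$ to $\infty$ and read off the leading $q$-exponent of the transformed eta-quotient factor by factor. Since $\gcd(c,d)=1$, I would pick $b,e\in\Z$ with $ce-bd=1$ and set $\sigma=\left(\begin{smallmatrix}c&b\\d&e\end{smallmatrix}\right)\in\SL$, so that $\sigma\infty=\frac{c}{d}$. Writing $f(\sigma\tau)=\prod_{\delta\mid N}\eta(\delta\sigma\tau)^{r_\delta}$ and using that $f(\sigma\tau)=(d\tau+e)^{k}(f|_k\sigma)(\tau)$, where the factor $(d\tau+e)^{k}$ is nonvanishing as $\tau\to i\infty$, the order of $f|_k\sigma$ at $\infty$ equals the leading $q$-exponent of $f(\sigma\tau)$ in $q=e^{2\pi i\tau}$. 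Because leading exponents add across a product, this reduces the whole computation to finding, for each $\delta\mid N$, the order of vanishing in $q$ of the single factor $\eta(\delta\sigma\tau)$, after which I sum these with weights $r_\delta$.

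The key step is a matrix decomposition. Note that $\delta\sigma\tau=M_\delta\tau$, where $M_\delta=\left(\begin{smallmatrix}\delta&0\\0&1\end{smallmatrix}\right)\sigma=\left(\begin{smallmatrix}\delta c&\delta b\\ d&e\end{smallmatrix}\right)$ has determinant $\delta$. I would factor $M_\delta=\gamma_\delta A_\delta$ with $\gamma_\delta\in\SL$ and $A_\delta=\left(\begin{smallmatrix}a_\delta&\beta_\delta\\0&\delta/a_\delta\end{smallmatrix}\right)$ upper triangular. Such a factorization exists because the left column $\left(\begin{smallmatrix}\delta c\\ d\end{smallmatrix}\right)$ of $M_\delta$ can be reduced by a suitable unimodular matrix to $\left(\begin{smallmatrix}a_\delta\\0\end{smallmatrix}\right)$ with $a_\delta=\gcd(\delta c,d)$, and taking determinants then forces the lower-right entry to be $\delta/a_\delta$. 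The crucial elementary observation is that $\gcd(c,d)=1$ gives $a_\delta=\gcd(\delta c,d)=\gcd(\delta,d)$.

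With the decomposition in hand, I would apply the transformation law for $\eta$ under the element $\gamma_\delta\in\SL$, namely $\eta(\gamma_\delta z)=v(\gamma_\delta)(c'z+d')^{1/2}\eta(z)$ where $(c',d')$ is the lower row of $\gamma_\delta$, with $z=A_\delta\tau$. The multiplier $v(\gamma_\delta)$ is a nonzero constant and the automorphy factor is nonvanishing at the cusp, so neither shifts the order. Since $A_\delta\tau=\frac{a_\delta^2\tau+a_\delta\beta_\delta}{\delta}$, the infinite product defining $\eta$ shows that $\eta(A_\delta\tau)$ has leading term a nonzero constant times $q^{a_\delta^2/(24\delta)}$; hence the order of $\eta(\delta\sigma\tau)$ in $q$ is $\frac{\gcd(\delta,d)^2}{24\delta}$. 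Summing with weights $r_\delta$ gives that $f(\sigma\tau)$ vanishes to order $\frac{1}{24}\sum_{\delta\mid N}\frac{\gcd(d,\delta)^2 r_\delta}{\delta}$ in $q$.

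Finally I would convert this $q$-order into the order relative to $\G_0(N)$ by multiplying by the width $h$ of the cusp $\frac{c}{d}$. For $\G_0(N)$ with $d\mid N$ the standard width formula gives $h=\frac{N}{\gcd(d^2,N)}=\frac{N}{d\,\gcd(d,N/d)}$, and multiplying the $q$-order by $h$ produces exactly $\frac{N}{24}\sum_{\delta\mid N}\frac{\gcd(d,\delta)^2 r_\delta}{\gcd(d,N/d)\,d\,\delta}$, the claimed value of $v_{\frac{c}{d}}$. I expect the main obstacle to be the factorization step: one must verify not only the gcd identity $\gcd(\delta c,d)=\gcd(\delta,d)$ but also that the reducing matrix $\gamma_\delta$ genuinely lies in $\SL$ and that the induced multiplier and automorphy factors are nonvanishing, so that they do not perturb the order. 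This half-integral-weight bookkeeping is the delicate part of the argument.
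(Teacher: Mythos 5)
Your proof is correct, but there is nothing internal to compare it against: the paper quotes Theorem \ref{1.65} verbatim from Ono's \emph{Web of Modularity} \cite{WebOf} without proof, and that result in turn goes back to Ligozat \cite{Ligozat}, with published proofs in Biagioli \cite{Biagioli} and Martin \cite{Martin}. Your argument is essentially that classical one: move the cusp to $\infty$ via $\sigma=\left(\begin{smallmatrix}c&b\\d&e\end{smallmatrix}\right)\in\modgroup$, factor $\left(\begin{smallmatrix}\delta&0\\0&1\end{smallmatrix}\right)\sigma=\gamma_\delta A_\delta$ with $\gamma_\delta\in\modgroup$ and $A_\delta$ upper triangular with $(1,1)$-entry $a_\delta=\gcd(\delta c,d)=\gcd(\delta,d)$ (your prime-by-prime check of this gcd identity, which is exactly where $\gcd(c,d)=1$ enters, is sound), read off the leading exponent $a_\delta^2/(24\delta)$ from the product formula for $\eta$, sum against the $r_\delta$, and multiply by the width $h=N/\gcd(d^2,N)=N/\bigl(d\gcd(d,N/d)\bigr)$, which is consistent with the paper's normalization $v_{\Gamma_0(N)}(f,c/d)=h\cdot\mathrm{inv}(f)$. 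Two phrasings deserve tightening, though neither is a genuine gap. First, the factors $v(\gamma_\delta)$ and $(c'A_\delta\tau+d')^{1/2}$ are not merely ``nonvanishing at the cusp'': when $c'\neq 0$ the latter grows like $|\tau|^{1/2}$ as $\tau\to i\infty$. The correct statement is that these factors (like $(d\tau+e)^k$) have polynomial growth and are bounded away from zero, hence cannot alter the exponential order in $q$, which is all your computation needs. Second, since some $r_\delta$ may be negative, the additivity of leading exponents should be justified by noting that each factor has an expansion of the form $C\,q^{a_\delta^2/(24\delta)}\bigl(1+O(q^{\varepsilon})\bigr)$ with $C\neq 0$ and $\varepsilon>0$, so that leading exponents add under arbitrary integer powers. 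With those points made explicit, your proof is complete and coincides with the standard decomposition argument of Ligozat and Biagioli rather than offering a genuinely different route.
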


Calculating orders of vanishing of modular forms at cusps can be extremely useful, due to the following result known as Sturm's bound.

\begin{theorem}[Sturm's Bound \cite{MDG}] \label{sturmBound} Let $\Gamma$ be a congruence subgroup and $f \in M_k(\Gamma)$.  Let $r_1, \ldots, r_t$ be the $\Gamma$-inequivalent cusps of $\Gamma$.  If
\[
  \sum_{i=1}^t \rm{inv}_{r_i}(f) > \frac{k[\modgroup : \{ \pm I\}\Gamma]}{12},
\]
then $f = 0$.
\end{theorem}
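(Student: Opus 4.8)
The plan is to prove the contrapositive: if $f \neq 0$, then $\sum_{i=1}^t \operatorname{inv}_{r_i}(f) \le \frac{k[\modgroup : \{\pm I\}\Gamma]}{12}$. The whole statement is really a consequence of the \emph{valence formula} (the ``$k/12$ formula''): a nonzero weight $k$ modular form has a prescribed total order of vanishing over a fundamental domain, with the cusps contributing their integral invariant orders and each elliptic point $P$ contributing $\frac{1}{e_P}\operatorname{ord}_P(f)$. Since $f \in M_k(\Gamma)$ is holomorphic on $\mathbb{H}$ and at every cusp, all of these local orders are nonnegative, so the cusp contributions by themselves cannot exceed the total $\frac{k}{12}[\modgroup : \{\pm I\}\Gamma]$ allotted by the formula. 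That inequality is exactly what is needed.

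First I would record the valence formula for $\modgroup$ itself: for nonzero $g \in M_\ell(\modgroup)$,
\[
\operatorname{ord}_\infty(g) + \tfrac{1}{2}\operatorname{ord}_i(g) + \tfrac{1}{3}\operatorname{ord}_\rho(g) + \sum_{P} \operatorname{ord}_P(g) = \frac{\ell}{12},
\]
the last sum running over the remaining $\modgroup$-inequivalent points of $\mathbb{H}$. This is the standard argument-principle computation, integrating $\frac{1}{2\pi i}\,\frac{g'}{g}$ around the boundary of the usual fundamental domain and reading off the corrections forced by the elliptic fixed points and the cusp. I would then transfer it to $\Gamma$ by the norm-over-cosets construction. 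With $m = [\modgroup : \{\pm I\}\Gamma]$ and coset representatives $\gamma_1, \dots, \gamma_m$ for $\{\pm I\}\Gamma \backslash \modgroup$, set $g := \prod_{j=1}^m f|_k\gamma_j$. Right multiplication by any element of $\modgroup$ permutes the cosets, so $g$ transforms as a weight $km$ form for all of $\modgroup$, and $g$ is holomorphic everywhere (including at $\infty$) precisely because $f$ is holomorphic at every cusp of $\Gamma$. Applying the formula above to $g$ gives $\frac{km}{12}$ as the total order, with every summand nonnegative.

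The remaining, and most delicate, step is the bookkeeping identifying $\operatorname{ord}_\infty(g)$ with $\sum_{i=1}^t \operatorname{inv}_{r_i}(f)$. Each cusp $r_i$ of $\Gamma$ is an $\modgroup$-translate of $\infty$, and the number of representatives $\gamma_j$ carrying $\infty$ into the $\Gamma$-orbit of $r_i$ is exactly the width $h_i$ (this is the standard fact that $\sum_i h_i = m$). Expanding each of these $h_i$ slashes in the local parameter $q_{h_i} = e^{2\pi i \tau / h_i}$ and multiplying, the lowest-order terms combine into $q^{\operatorname{inv}_{r_i}(f)}$ with $q = e^{2\pi i \tau}$, since $q = q_{h_i}^{h_i}$ and each factor contributes order $\operatorname{inv}_{r_i}(f)$ in $q_{h_i}$; summing over $i$ yields $\operatorname{ord}_\infty(g) = \sum_{i=1}^t \operatorname{inv}_{r_i}(f)$. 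Discarding the nonnegative elliptic-point and interior contributions in the valence formula for $g$ then gives $\sum_{i=1}^t \operatorname{inv}_{r_i}(f) \le \frac{km}{12}$, the contrapositive of the claim.

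I expect this width/normalization accounting to be the main obstacle: one has to confirm that the fractional $q$-orders of the individual slashes $f|_k\gamma_j$ recombine into the integral order of the genuine $\modgroup$-form $g$, and that the elliptic corrections at $i$ and $\rho$, together with all interior orders, are nonnegative and may safely be dropped. If one is willing to cite the valence formula for $\Gamma$ directly in the weighted form $\sum_{i=1}^t \operatorname{inv}_{r_i}(f) + \sum_{P} \frac{1}{e_P}\operatorname{ord}_P(f) = \frac{km}{12}$, then the argument collapses immediately to the nonnegativity observation, and the only remaining work is justifying that formula with its correct normalization of cusp and elliptic-point contributions.
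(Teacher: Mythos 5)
The paper offers no proof of Theorem \ref{sturmBound}: it is stated as a known result and cited to \cite{MDG}, so the comparison can only be with the standard textbook argument---which is exactly what you give. Your proof (take the norm $g=\prod_{j=1}^m f|_k\gamma_j$ over coset representatives of $\{\pm I\}\Gamma\backslash\modgroup$, apply the valence formula for $\modgroup$ to the weight-$km$ form $g$, identify $\operatorname{ord}_\infty(g)$ with the cusp orders of $f$ via the double-coset count $\sum_i h_i=m$, and discard the nonnegative elliptic and interior contributions) is correct in substance and is essentially the proof in the cited reference. Two small repairs are worth recording. First, a normalization point: under the paper's convention $v_\Gamma(f,r_i)=h_i\operatorname{inv}_{r_i}(f)$, each factor $f|_k\gamma_j$ attached to $r_i$ has $q_{h_i}$-order $h_i\operatorname{inv}_{r_i}(f)$, so your bookkeeping actually yields $\operatorname{ord}_\infty(g)=\sum_i h_i\operatorname{inv}_{r_i}(f)$; your claimed equality $\operatorname{ord}_\infty(g)=\sum_i\operatorname{inv}_{r_i}(f)$ is literally correct only if $\operatorname{inv}$ denotes the (integral) order in the local parameter. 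Since $h_i\geq 1$ and all orders of a holomorphic $f$ are nonnegative, the stated inequality $\sum_i\operatorname{inv}_{r_i}(f)\leq \frac{k}{12}[\modgroup:\{\pm I\}\Gamma]$ follows a fortiori under either reading, so nothing breaks. Second, for odd $k$ with $-I\notin\Gamma$, right translation permutes the cosets only up to factors $f|_k(\pm\delta)=\pm f$ with $\delta\in\Gamma$, so $g$ is a priori modular only up to a quadratic character, and expansions at irregular cusps may involve half-integral exponents; replacing $g$ by $g^2$ (weight $2km$, bound $\frac{2km}{12}$, all cusp orders doubled) disposes of both issues in one stroke. Neither point is a genuine gap---both are one-line fixes, and for the even weights actually used in this paper they never arise.
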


\begin{remark}\label{SturmRmk}
We note that Theorem \ref{sturmBound} provides a direct way to check if two modular forms are equal by considering their $q$-expansions.  Namely, if $f,g\in M_k(\G)$ and their Fourier expansions at $i\infty$ agree to a power of $q$ past the bound in Theorem \ref{sturmBound}, then they must be equal.  
\end{remark}

Our first theorem shows that when $\gcd(N,6)=1$, any eta-quotient which is modular for a congruence subgroup of level $N$ is in fact modular for $\G_0(N)$ for some Nebentypus character $\chi$.

\begin{theorem}\label{Gamma0Thm}
Let $N$ be a positive integer with $\gcd(N,6)=1$ and suppose $f(\tau) = \prod_{\delta \mid N} \eta^{r_\delta}(\delta \tau) \in M_k^!(\Gamma(N))$.  Then $f(\tau) \in M_k^!(\Gamma_0(N),\chi)$, where $\chi$ is defined as in Theorem \ref{1.64}.  
\end{theorem}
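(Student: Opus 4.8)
The plan is to extract from the hypothesis $f \in M_k^!(\Gamma(N))$ exactly one arithmetic relation on the exponents $r_\delta$, namely the first Newman-type congruence of Theorem \ref{1.64}, and then to let Theorem \ref{1.64} together with Remark \ref{1.64reduct} do the rest. The key observation is that although $\Gamma(N)$ does not contain the generator $T=\left(\begin{smallmatrix}1&1\\0&1\end{smallmatrix}\right)$ of $\modgroup$, it does contain the translation $T^N = \left(\begin{smallmatrix}1&N\\0&1\end{smallmatrix}\right)$, and the transformation of any eta-quotient under a pure translation is completely explicit. This single matrix turns out to carry all the information we need, so that the full strength of $\Gamma(N)$-modularity is never used.

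First I would compute the action of $T^N$ on $f$. Since $\eta(z+1)=e^{\pi i/12}\eta(z)$, for each $\delta \mid N$ we have $\eta(\delta(\tau+N)) = \eta(\delta\tau + \delta N) = e^{\pi i \delta N/12}\eta(\delta\tau)$, because $\delta N \in \Z$. Taking the product over $\delta\mid N$ gives
\[
f(\tau+N) = \prod_{\delta\mid N}\big(e^{\pi i \delta N/12}\big)^{r_\delta}\eta^{r_\delta}(\delta\tau) = e^{\frac{\pi i N}{12}\sum_{\delta\mid N}\delta r_\delta}\, f(\tau).
\]
On the other hand, $T^N\in\Gamma(N)$ has lower-left entry $c=0$ and lower-right entry $d=1$, so its automorphy factor $(c\tau+d)^k=1$; since $f\in M_k^!(\Gamma(N))$ transforms trivially under every element of $\Gamma(N)$, we must have $f(\tau+N)=f(\tau)$. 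Comparing the two expressions forces $e^{\frac{\pi i N}{12}\sum_{\delta\mid N}\delta r_\delta}=1$, that is,
\[
N\sum_{\delta\mid N}\delta r_\delta \equiv 0 \pmod{24}.
\]

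To finish, I would invoke the hypothesis $\gcd(N,6)=1$, which gives $\gcd(N,24)=1$ (as $N$ is then odd and coprime to $3$), so $N$ is a unit in $\Z/24\Z$ and may be cancelled to yield $\sum_{\delta\mid N}\delta r_\delta\equiv 0\pmod{24}$. By Remark \ref{1.64reduct} this is equivalent, under $\gcd(N,6)=1$, to $\sum_{\delta\mid N}\frac{N}{\delta}r_\delta\equiv 0\pmod{24}$, so both congruence hypotheses of Theorem \ref{1.64} now hold. Applying Theorem \ref{1.64} with $k=\tfrac12\sum_{\delta\mid N}r_\delta$ and $\chi(d)=\leg{(-1)^k s}{d}$, where $s=\prod_{\delta\mid N}\delta^{r_\delta}$, we conclude $f\in M_k^!(\Gamma_0(N),\chi)$ as claimed; the weight $k$ is the same on both sides since it is intrinsic to the eta-quotient.

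The proof is short once one sees that a single element of $\Gamma(N)$ suffices, so the main obstacle is conceptual rather than computational: recognizing that full $\Gamma(N)$-modularity is overkill and that only the $T^N$-invariance of $f$ is needed to pin down the exponents. Two minor points deserve care. First, one should confirm that $M_k^!(\Gamma(N))$-membership genuinely forces $f(\tau+N)=f(\tau)$; this is immediate from the standard convention defining $M_k^!(\Gamma(N))$, since $T^N$ has trivial automorphy factor. Second, one need not separately verify the behavior at the cusps, because the conclusion of Theorem \ref{1.64} already lands in the weakly holomorphic space $M_k^!(\Gamma_0(N),\chi)$, so holomorphy at the cusps is inherited automatically rather than checked by hand.
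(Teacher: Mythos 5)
Your proof is correct, and it takes a genuinely different --- and more economical --- route than the paper's. Both arguments reduce the theorem to the single congruence $\sum_{\delta\mid N}\delta r_\delta\equiv 0\pmod{24}$, after which Remark \ref{1.64reduct} and Theorem \ref{1.64} finish in the same way. The paper, however, obtains that congruence indirectly: it first proves Lemma \ref{Gamma0Lem}, a multiplicative-complement principle (if $f\in M_{k_f}^!(\Gamma(N))$ has $\sum_{\delta\mid N}\delta r_\delta\equiv t\pmod{24}$, then every eta-quotient of level $N$ with residue $-t$ is forced into $M^!(\Gamma(N))$ as well, by multiplying up to a quotient satisfying Theorem \ref{1.64} and dividing), and then, taking the contrapositive, rules out each nonzero residue $t$ by exhibiting the witness $\eta^t\notin M_{t/2}^!(\Gamma(N))$, which is checked with the explicit multiplier formula $v(A)$ at $A=\begin{psmallmatrix}1&0\\N&1\end{psmallmatrix}$. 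That lemma exists precisely because the multiplier of a general eta-quotient at a non-translation matrix is awkward to compute; the lemma reduces matters to pure powers of $\eta$, where one matrix computation suffices. You bypass the reduction entirely by choosing the translation $T^N=\begin{psmallmatrix}1&N\\0&1\end{psmallmatrix}\in\Gamma(N)$, under which each factor $\eta(\delta\tau)$ transforms by the transparent scalar $e^{\pi i\delta N/12}$, so that $24\mid N\sum_{\delta\mid N}\delta r_\delta$ drops out in one line for an arbitrary eta-quotient (using that $\eta$, hence $f$, is nonvanishing on $\mathbb{H}$), and $\gcd(N,24)=1$ lets you cancel $N$. In effect your matrix probes the cusp $\infty$ and produces the first Newman condition, while the paper's lower-triangular matrix probes the cusp $0$ and yields the second; by Remark \ref{1.64reduct} these are equivalent when $\gcd(N,6)=1$. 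What the paper's longer route buys is Lemma \ref{Gamma0Lem} itself, a statement of some independent interest about which residues of $\sum_{\delta\mid N}\delta r_\delta$ are attainable on $\Gamma(N)$; what your route buys is brevity and elementarity, since only $\eta(z+1)=e^{\pi i/12}\eta(z)$ is needed and all Legendre-symbol computations disappear. Your side remarks are also correctly placed: $T^N$ has trivial automorphy factor under the standard definition of $M_k^!(\Gamma(N))$, no cusp conditions need checking because the target space is weakly holomorphic, and the weight is pinned to $\tfrac12\sum_{\delta\mid N}r_\delta$ (as one sees by comparing automorphy factors at any element of $\Gamma(N)$ with $c\neq 0$, which is the same implicit normalization the paper makes in Lemma \ref{Gamma0Lem}).
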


In Section \ref{proof1}, Theorem \ref{Gamma0Thm} is proved by showing that $f(\tau) \in M_k^!(\Gamma(N))$ must satisfy the conditions of Theorem \ref{1.64}. Using that fact that $M_k^!(\Gamma_0(N),\chi) \subset M_k^!(\Gamma_1(N))$ as well as Remark \ref{1.64reduct}, we thus get the following immediate corollary to the proof of Theorem \ref{Gamma0Thm}.

\begin{corollary}\label{etathm}
Let $N$ be a positive integer with $\gcd(N,6)=1$.  Then $f(\tau) = \prod_{\delta \mid N} \eta^{r_\delta}(\delta\tau) \in M_k^!(\Gamma_1(N))$ if and only if $f(\tau)$ satisfies the conditions in Theorem \ref{1.64}.
\end{corollary}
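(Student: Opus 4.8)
The plan is to obtain both directions of the equivalence from the chain of inclusions $\Gamma(N) \subseteq \Gamma_1(N) \subseteq \Gamma_0(N)$ together with the content already extracted in the course of proving Theorem \ref{Gamma0Thm}. The two structural facts I would lean on are that modularity for a group descends to any subgroup, giving $M_k^!(\Gamma_1(N)) \subseteq M_k^!(\Gamma(N))$, and that a form of Nebentypus $\chi$ for $\Gamma_0(N)$ restricts to one for $\Gamma_1(N)$, giving $M_k^!(\Gamma_0(N),\chi) \subseteq M_k^!(\Gamma_1(N))$.

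For the implication $f \in M_k^!(\Gamma_1(N)) \Rightarrow$ the conditions of Theorem \ref{1.64}, I would first note that since $\Gamma(N) \subseteq \Gamma_1(N)$, any $f \in M_k^!(\Gamma_1(N))$ already lies in $M_k^!(\Gamma(N))$: the weight-$k$ transformation law holds on the smaller group because it holds on the larger one, and meromorphy at the cusps is inherited upon refining the group. The hypothesis of Theorem \ref{Gamma0Thm} is thus met, and its proof shows exactly that membership in $M_k^!(\Gamma(N))$ forces the congruence conditions of Theorem \ref{1.64}. Concretely, invariance of $f$ under $\left(\begin{smallmatrix}1 & N\\0 & 1\end{smallmatrix}\right) \in \Gamma(N)$ produces $N\sum_{\delta \mid N}\delta r_\delta \equiv 0 \pmod{24}$, which by $\gcd(N,24)=1$ is the first condition, and Remark \ref{1.64reduct} then supplies the second for free.

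For the reverse implication I would assume the conditions of Theorem \ref{1.64} and apply that theorem directly to place $f$ in $M_k^!(\Gamma_0(N),\chi)$. It then remains only to invoke $M_k^!(\Gamma_0(N),\chi) \subseteq M_k^!(\Gamma_1(N))$: for $M = \left(\begin{smallmatrix}a & b\\c & d\end{smallmatrix}\right) \in \Gamma_1(N)$ one has $d \equiv 1 \pmod N$, so $\chi(d) = \chi(1) = 1$ and the twisted law $f(M\tau) = \chi(d)(c\tau+d)^k f(\tau)$ collapses to the trivial-character law defining $\Gamma_1(N)$-modularity.

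Since the substantive work is already housed in the proof of Theorem \ref{Gamma0Thm}, I do not expect a genuine obstacle; the corollary is largely bookkeeping about the ambient group. The only points demanding care are applying the two containments in their correct directions — the descent $M_k^!(\Gamma_1(N)) \subseteq M_k^!(\Gamma(N))$ feeding the forward implication and $M_k^!(\Gamma_0(N),\chi) \subseteq M_k^!(\Gamma_1(N))$ feeding the reverse — and confirming that $\chi$ evaluates trivially on $d \equiv 1 \pmod N$ so that no character information is lost in passing between the groups.
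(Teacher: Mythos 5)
Your proof is correct, and its skeleton is exactly the paper's: the forward direction descends from $\Gamma_1(N)$ to $\Gamma(N)$ and invokes the proof of Theorem \ref{Gamma0Thm} together with Remark \ref{1.64reduct}, while the reverse direction applies Theorem \ref{1.64} and the inclusion $M_k^!(\Gamma_0(N),\chi)\subset M_k^!(\Gamma_1(N))$, justified as you do by $\chi(d)=\chi(1)=1$ for $d\equiv 1\pmod{N}$. The one point where you diverge is your ``concrete'' account of the forward direction: the paper's proof of Theorem \ref{Gamma0Thm} does \emph{not} proceed via invariance under $\begin{psmallmatrix}1&N\\0&1\end{psmallmatrix}$; it runs the contrapositive of Lemma \ref{Gamma0Lem}, exhibiting for each nonzero residue $t$ modulo $24$ the non-modular quotient $\eta^t(\tau)$ by evaluating the eta multiplier at the lower-triangular matrix $\begin{psmallmatrix}1&0\\N&1\end{psmallmatrix}$. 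Your $T^N$ argument is nevertheless a valid, self-contained, and in fact simpler substitute: writing $s=\sum_{\delta\mid N}\delta r_\delta$, the $q$-expansion gives $f(\tau+N)=e^{2\pi i Ns/24}f(\tau)$, so the transformation law (with $c=0$, $d=1$) forces $24\mid Ns$, hence $24\mid s$ by $\gcd(N,24)=1$, which is the first condition of Theorem \ref{1.64}; Remark \ref{1.64reduct} then yields the second. You could even shortcut further, since $\begin{psmallmatrix}1&1\\0&1\end{psmallmatrix}\in\Gamma_1(N)$ already, so $f(\tau+1)=f(\tau)$ forces $24\mid s$ directly, with no detour through $\Gamma(N)$ and no use of coprimality at that step. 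In short: same high-level route as the paper, with a mild misattribution of the internal mechanism of Theorem \ref{Gamma0Thm}'s proof that happens to be a correct (and more elementary) alternative argument.
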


From Corollary \ref{etathm}, we see that despite there being many more cusps of $\Gamma_1(N)$ than $\Gamma_0(N)$, we only need to consider the cusps of $\Gamma_0(N)$ when calculating orders of vanishing of eta-quotients in $M_k^!(\Gamma_1(N))$.  In work of Martin \cite{Martin}, a complete set of representatives for the cusps of $\Gamma_0(N)$ is given by

\[
C_{\G_0(N)}  = \left\{ \frac{a_c}{c} \in \Q : c \mid N, 1\leq a_c\leq N, \gcd(a_c,N)=1 \text{ and } a_c \equiv a_c'  \!\!\!\! \pmod{\gcd(c,N/c)} \text{ iff } a_c = a_c' \right\}.
\]

In the case where $N$ is squarefree, $\gcd(c,N/c) = 1$ for all $c$ which divide $N$. This gives the following complete set of representatives:

\begin{equation}\label{cuspreps}
C_{\G_0(N)} = \left\{\frac{1}{d} \in \Q: d \mid N \right\}.
\end{equation}


\subsection{Classifications}
Our next results deal with classifying when eta-quotients can exist in spaces of holomorphic modular forms of even weight and prime or semiprime level.  The following theorem uses techniques from work of Arnold-Roksandich, James, and Keaton \cite{ARJK}, and also addresses an error in \cite[Cor. 3.2]{ARJK}. 

\begin{theorem}\label{correctThm}
Let $p\geq 5$ be prime, set $h=\frac{1}{2}\gcd(p-1,24)$, and let $k$ be an even integer. Then there exists $f=\eta^{r_1}(\tau)\eta^{r_p}(p\tau) \in M_k\big( \Gamma_1(p)\big)$ if and only if both of the following conditions hold.
\begin{enumerate}
 \item $h \mid k$
 \item It is not the case that $p \ne 5$, $p \equiv 5 \pmod{24}$, and $k = 2$.
\end{enumerate}
\end{theorem}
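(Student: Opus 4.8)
The plan is to use Corollary \ref{etathm} to translate membership of $f=\eta^{r_1}(\tau)\eta^{r_p}(p\tau)$ in $M_k^!(\Gamma_1(p))$ into the two congruence conditions of Theorem \ref{1.64}, and then to impose holomorphicity by forcing the orders of vanishing at the cusps to be nonnegative. Since $p$ is prime its only divisors are $1$ and $p$, and by \eqref{cuspreps} the cusps of $\Gamma_0(p)$ are $\tfrac{1}{1}$ and $\tfrac{1}{p}$; as $f$ is modular on $\Gamma_0(p)$ with character by Theorem \ref{Gamma0Thm}, holomorphicity on $\Gamma_1(p)$ is equivalent to holomorphicity at just these two cusps. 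Evaluating Theorem \ref{1.65} gives $v_{1/1}=\tfrac{1}{24}(p r_1+r_p)$ and $v_{1/p}=\tfrac{1}{24}(r_1+p r_p)$, so with the weight normalization $r_1+r_p=2k$ the entire problem reduces to deciding for which $k$ there are integers $r_1,r_p$ with
\begin{align*}
&r_1+r_p=2k,\\
&r_1+p r_p\equiv 0\equiv p r_1+r_p \pmod{24},\\
&r_1+p r_p\geq 0,\qquad p r_1+r_p\geq 0.
\end{align*}
(Adding the two inequalities shows $k\geq 0$, and $k=0$ gives the constant $f=1$, so the content lies in $k\geq 2$.)

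First I would handle the congruences. Substituting $r_p=2k-r_1$ makes the two congruences consistent exactly when $(p+1)k\equiv 0\pmod{12}$, after which they collapse to $r_1(p-1)\equiv -2k\pmod{24}$; this is solvable in $r_1$ if and only if $\gcd(p-1,24)=2h$ divides $2k$, i.e. $h\mid k$. A short check over the residues of $p$ modulo $24$ shows that $h\mid k$ already forces $(p+1)k\equiv 0\pmod{12}$, so solvability of the congruence-and-weight system is precisely condition (1).

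The heart of the proof is to decide, granting $h\mid k$, when a solution can also be made to satisfy both inequalities. I would write $r_1=k+s$, $r_p=k-s$ with $s\in\Z$, turning the inequalities into $|s|\leq L:=\tfrac{(p+1)k}{p-1}$ and the congruence into $(p-1)s\equiv -(p+1)k\pmod{24}$, whose solutions form a single class modulo $\tfrac{12}{h}$. Thus I need a representative of that class in the symmetric interval $[-L,L]$. A closed interval of length $\geq\tfrac{12}{h}$ always meets the class, and since $\tfrac{p+1}{p-1}>1$ we have $2L>2k$, so $kh\geq 6$ already guarantees a solution. Because any positive admissible $k$ satisfies $k\geq h$, the bound $kh\geq 6$ can fail only when $h\leq 2$, leaving just $p\equiv 11,23\pmod{24}$ (where $h=1$) and $p\equiv 5\pmod{24}$ (where $h=2$).

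Finally I would resolve these low-$h$ cases directly. For $h=1$ one checks that $s=0$ solves the congruence (this is exactly where $k$ even is used), so $f$ always exists. For $h=2$ the only weight that can fail is $k=2$, and there the congruence forces $s\equiv 3\pmod 6$; the smallest admissible $|s|$ is $3$, and $3\leq L=\tfrac{2(p+1)}{p-1}$ holds if and only if $p\leq 5$. Hence the construction succeeds for $p=5$ but for no other $p\equiv 5\pmod{24}$, which is exactly the exception in condition (2). I expect the main obstacle to be this last step: the clean bound $kh\geq 6$ cannot by itself settle the finitely many small cases, and it is precisely the borderline arithmetic at $p\equiv 5\pmod{24}$, $k=2$ that both creates the genuine exception and, presumably, accounts for the oversight in \cite[Cor.~3.2]{ARJK}.
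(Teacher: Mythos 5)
Your proof is correct, but it takes a genuinely different route from the paper's. The forward extraction of $h \mid k$ is the same in both (the paper's Lemma \ref{hLemma} uses exactly your Diophantine reduction of the cusp-order and weight equations), but everything after that diverges. The paper handles the exceptional nonexistence ($p \equiv 5 \pmod{24}$, $p > 5$, $k=2$) by importing the Rouse--Webb coefficient bound (Theorem \ref{expbound} via Remark \ref{RWrmk}) to force $|r_1|+|r_p| \le 4$ and $r_1, r_p \ge 0$, then checking that $r_1 + 5r_p \equiv 0 \pmod{24}$ has no such solutions; and it proves existence by explicit case-by-case constructions: $\eta^k(\tau)\eta^k(p\tau)$ when $k(p+1)/12$ is even (Lemma \ref{evenexistence}), $\eta^9(\tau)\eta^3(p\tau)$ for $p \equiv 13 \pmod{24}$, $\eta^5(5\tau)/\eta(\tau)$ for $p=5$, and products of $f_4 = \eta^4\eta^4$ and $f_6 = \eta^9\eta^3$ for the remaining $p \equiv 5 \pmod{24}$, $k \ge 4$ cases. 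You instead parametrize all candidates by $s = (r_1-r_p)/2$ (an integer since $r_1+r_p = 2k$ is even) and reduce the entire theorem to whether a single residue class modulo $12/h$ meets the interval $[-L,L]$ with $L = (p+1)k/(p-1)$; the interval-length bound $kh \ge 6$ disposes of all but $h \le 2$, and the boundary arithmetic at $h=2$, $k=2$ (class $s \equiv 3 \pmod 6$ versus $L = 2(p+1)/(p-1) < 3$ for $p>5$) produces both the nonexistence and the $p=5$ construction from one computation. I verified your subsidiary claims: $h \mid k$ with $k$ even does force $12 \mid (p+1)k$ in all residue classes of $p$ modulo $24$, the two congruences of Theorem \ref{1.64} coincide as you use them (consistent with Remark \ref{1.64reduct}), and restricting holomorphy to the two $\Gamma_0(p)$-cusps is justified by Theorem \ref{Gamma0Thm} and the discussion after Corollary \ref{etathm}. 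What each approach buys: yours is uniform and self-contained, eliminating the external analytic input of \cite{RW} and making transparent exactly why the exception in condition (2) occurs (it is the unique borderline where the admissible interval is too short); the paper's approach yields clean explicit eta-quotients in each case and showcases the Rouse--Webb bound as a reusable tool. One shared caveat, not a gap relative to the paper: both arguments implicitly take $k > 0$ (your observation that the inequalities force $k \ge 0$ actually shows the theorem as literally stated would fail for negative even $k$ with $h \mid k$, since $M_k$ is then trivial), so the statement should be read with $k$ positive.
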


We also prove a theorem of similar flavor for eta-quotients of semiprime level, which extends previous work of Arnold-Roksandich, James, and Keaton \cite[Theorem 5.8]{ARJK2}. 

\begin{theorem} \label{mastersemi}
Let $p, q \ge 5$ be distinct primes, $N = pq$, and $k$ an even integer.  Let $h = \frac{1}{2}\gcd\left(24,p-1,q-1\right)$.  Then there exists $f(\tau) = \prod_{\delta | N}\eta^{r_\delta}(\delta \tau) \in M_k(\Gamma_1(pq))$ if and only if both of the following conditions hold.
\begin{enumerate}
\item$h \mid k$ 
\item It is not the case that $(p, q) \!\!\! \mod{24} \in \{(1, 5),(5,1),(5, 5)\}$, $p, q \ne 5$, and $k = 2$.
\end{enumerate}
\end{theorem}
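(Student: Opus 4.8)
The plan is to translate the existence question into a system of linear Diophantine conditions on the exponent vector $(r_1,r_p,r_q,r_{pq})$ and then analyze its solvability. Since $\gcd(pq,6)=1$, Corollary \ref{etathm} shows that $f(\tau)=\prod_{\delta\mid pq}\eta^{r_\delta}(\delta\tau)$ lies in $M_k^!(\Gamma_1(pq))$ precisely when the two congruences of Theorem \ref{1.64} hold, which by Remark \ref{1.64reduct} collapse to the single condition $r_1+pr_p+qr_q+pq\,r_{pq}\equiv 0\pmod{24}$; the weight is recorded by $r_1+r_p+r_q+r_{pq}=2k$. Passing from $M_k^!$ to $M_k$ requires holomorphicity at every cusp, and by Corollary \ref{etathm} it suffices to check the four cusps $\tfrac1d$ $(d\mid pq)$ of $\Gamma_0(pq)$. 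First I would use Theorem \ref{1.65} (with $\gcd(d,N/d)=1$ since $pq$ is squarefree) to write the four orders of vanishing $v_0,v_{1/p},v_{1/q},v_\infty$ explicitly as linear forms in the $r_\delta$; the resulting coefficient matrix is the Kronecker product of the two prime-level matrices $\begin{psmallmatrix} p&1\\1&p\end{psmallmatrix}$ and $\begin{psmallmatrix} q&1\\1&q\end{psmallmatrix}$, because $N=pq$ factors the $\gcd$-weights. Existence of $f\in M_k(\Gamma_1(pq))$ is then equivalent to finding an integer vector satisfying the weight equation, the mod-$24$ congruence, and the four inequalities $v_\bullet\ge 0$.

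For the necessity of (1), I would eliminate $r_1$ via the weight equation to rewrite the congruence as $(p-1)r_p+(q-1)r_q+(pq-1)r_{pq}\equiv -2k\pmod{24}$. Since $pq-1=q(p-1)+(q-1)$, one has $\gcd(p-1,q-1,pq-1)=\gcd(p-1,q-1)$, so the left side ranges over the subgroup $g\,\Z/24\Z$ with $g=\gcd(24,p-1,q-1)=2h$. A solution therefore forces $2h\mid 2k$, i.e.\ $h\mid k$, independently of the holomorphicity inequalities, which gives (1).

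For the necessity of (2), I would specialize to $k=2$ in the three classes $(p,q)\bmod 24\in\{(1,5),(5,1),(5,5)\}$ with $p,q\neq5$. Summing the four order formulas yields the degree invariant $v_0+v_{1/p}+v_{1/q}+v_\infty=\tfrac{(p+1)(q+1)k}{12}$, so at $k=2$ the total degree is small; combined with the residues the congruence forces on the $r_\delta$ in these classes, I would show—exactly as in the interval argument behind Theorem \ref{correctThm}—that every congruence-admissible vector drives at least one $v_\bullet$ strictly below $0$ once $p,q>5$, so no holomorphic choice survives. The hypothesis $p,q\neq5$ enters because for $p=5$ (or $q=5$) the admissible lattice point lands exactly on a boundary face $v_\bullet=0$, keeping $f$ holomorphic, which is precisely why $5$ is exempted.

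The sufficiency direction is the main obstacle. Assuming (1) and excluding the three $k=2$ cases, I would exhibit an explicit admissible exponent vector. The key structural reduction is multiplicativity: the product of eta-quotients in $M_{k_1}(\Gamma_1(pq))$ and $M_{k_2}(\Gamma_1(pq))$ is again an eta-quotient in $M_{k_1+k_2}(\Gamma_1(pq))$, since weights, the mod-$24$ congruence, and all orders of vanishing add. Hence it suffices to construct generators at the smallest realizable weights and multiply up. I would split into cases by $(p,q)\bmod 24$ (which fix both $h$ and the solution set of the congruence): when $h$ is odd or $h\ge 4$, a single generator at the minimal valid weight produces, by powers, every valid $k$; in the non-exceptional $h\in\{1,2\}$ classes I would build a generator already at $k=2$; and in the three exceptional classes I would confirm $k=2$ fails but construct generators at weights $4$ and $6$, whose products realize every even $k\ge 4$ (leaving $k=2$ as the unique omission). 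The delicate part is carrying out this finite but intricate construction uniformly in $p$ and $q$—choosing, in each residue class, an integer vector on the congruence sublattice with all four $v_\bullet\ge 0$—and verifying that the boundary behaviour at $p=5$ or $q=5$ matches the stated exceptions exactly.
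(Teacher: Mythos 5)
Your overall architecture tracks the paper's closely: the reduction via Corollary \ref{etathm} and Remark \ref{1.64reduct} to the single congruence $\sum_{\delta \mid N}\delta r_\delta \equiv 0 \pmod{24}$ plus the weight equation, and your gcd argument for the necessity of (1) (eliminating one exponent and using $\gcd(p-1,q-1,pq-1)=\gcd(p-1,q-1)$) is essentially the paper's Theorem \ref{sfhlemma}, correctly carried out at the level of $M_k^!$. The genuine gap is in the necessity of (2). Your quantitative premise is false: at $k=2$ the total order of vanishing is $(p+1)(q+1)/6$, which is unbounded in $p$ and $q$, so ``the total degree is small'' gives no constraint; and since the inverse of your Kronecker-product matrix has entries of both signs, nonnegativity of the four $v_\bullet$ does not by itself confine the exponent vector to a small box. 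There is also no ``interval argument behind Theorem \ref{correctThm}'' of the kind you invoke: the paper's prime-level proof, and its semiprime proof here, both rest on the Rouse--Webb bound (Theorem \ref{expbound}, extended to Nebentypus in Remark \ref{RWrmk}), which for $k=2$ gives $\sum_\delta |r_\delta| \le 4\left(\frac{p+1}{p-1}\right)\left(\frac{q+1}{q-1}\right) < 5$, worst case $(p,q)=(29,53)$. Only with $\sum_\delta|r_\delta| \le 4$ in hand does one deduce $r_\delta \ge 0$ from $v_\bullet \ge 0$, after which the congruence $r_1+r_p+5r_q+5r_N \equiv 0 \pmod{24}$ (resp.\ $r_1+5r_p+5r_q+r_N \equiv 0 \pmod{24}$) takes values in $[4,20]$ and yields the contradiction. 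Your Kronecker observation could in fact substitute for the citation---inverting the system bounds each $|r_\delta|$ by $\frac{4pq}{(p-1)(q-1)}<5$, and the dominant diagonal coefficient $pq$ then forces $r_\delta \ge 0$---but your sketch contains neither this estimate nor any replacement for it, so the central claim that every congruence-admissible vector violates some $v_\bullet \ge 0$ is unsupported as written.

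The sufficiency direction, which you yourself flag as ``the delicate part,'' is likewise left unexecuted, and it is where the remaining content lies. Your multiplicative framework (powers of a weight-$h$ generator; weight-$4$ and weight-$6$ generators combining to all even $k \ge 4$ in the exceptional classes; the boundary behaviour when $p=5$ or $q=5$) matches the paper's, but the paper completes it concretely: Lemma \ref{sfexistence} with $t=2$ (the symmetric quotient $\prod_{\delta\mid N} \eta^{k/2}(\delta\tau)$) disposes of every case with $4 \mid k(p+1)(q+1)/12$, reducing to $p+1\equiv q+1\equiv k\equiv 2 \pmod{4}$; the residue classes $(1,1),(1,17),(17,17)$ are then excluded since $4\mid h$ there; explicit quotients such as $\eta^{11}(\tau)\eta(q\tau)$ for $(1,13)$ and $\eta(p\tau)\eta^2(q\tau)\eta(N\tau)$ for $(5,13)$ settle the five remaining non-exceptional classes at weight $h$; and for $(1,5)$ and $(5,5)$ with $p,q\neq 5$ the weight-$6$ forms $\eta^3(\tau)\eta^9(N\tau)$ and $\eta^3(q\tau)\eta^9(N\tau)$ pair with the weight-$4$ form supplied by Lemma \ref{sfexistence}, while the $p=5$ or $q=5$ case reduces to Theorem \ref{correctThm}. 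Producing such admissible exponent vectors in each class---and verifying all four cusp orders via Theorem \ref{1.65}---is precisely what your proposal defers, so as it stands it is a correct plan whose two decisive steps (the exponent bound and the explicit constructions) are missing.
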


\subsection{Writing modular forms attached to elliptic curves in terms of eta-quotients}

Utilizing our work in Section \ref{section4}, we explore when $S_2(\Gamma_0(pq))$ has a basis consisting of eta-quotients.  This allows us to provide examples of elliptic curves attached via the Modularity Theorem (Thm. \ref{modularityTheorem}) to linear combinations of eta-quotients.  

\begin{theorem}\label{N=35}
Let $E$ be an elliptic curve with conductor 35.  Then $E$ is associated via the Modularity Theorem  to the modular form $f(\tau) \in S_2(\Gamma_0(35))$ given by 
\[
f(\tau) = \eta(\tau)^2\eta(35\tau)^2 + \eta(5\tau)^2\eta(7\tau)^2.
\]
\end{theorem}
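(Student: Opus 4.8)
The plan is to show that the right-hand side $f(\tau) = \eta(\tau)^2\eta(35\tau)^2 + \eta(5\tau)^2\eta(7\tau)^2$ lies in $S_2(\Gamma_0(35))$, and then to identify it with the normalized rational newform attached to $E$ by the Modularity Theorem through a finite comparison of $q$-expansion coefficients via Sturm's bound. Since there is a single isogeny class of conductor $35$, all such curves $E$ share one $L$-function and hence one weight-$2$ newform $g = \sum_{n \ge 1} a_E(n)q^n$, so the statement is independent of the chosen $E$ and it suffices to prove $f = g$.

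First I would check that each summand is modular via Theorem \ref{1.64}. For $N = 35$ with divisors $1,5,7,35$, the exponent vector $(r_1,r_5,r_7,r_{35}) = (2,0,0,2)$ gives $\sum_\delta \delta r_\delta = 72$ and $\sum_\delta (N/\delta) r_\delta = 72$, while $(0,2,2,0)$ gives $\sum_\delta \delta r_\delta = 24 = \sum_\delta (N/\delta) r_\delta$; both vectors satisfy the two congruences modulo $24$. In each case the weight is $k = \tfrac12\sum_\delta r_\delta = 2$, and the associated $s = \prod_\delta \delta^{r_\delta}$ equals $35^2$, a perfect square, so the Nebentypus $\chi(d) = \leg{(-1)^2 35^2}{d}$ is trivial. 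Hence both eta-quotients lie in $M_2^!(\Gamma_0(35))$, and therefore so does $f$.

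Next I would establish holomorphy and cuspidality by computing orders of vanishing. As $35$ is squarefree, \eqref{cuspreps} furnishes exactly the four cusps $1/d$ with $d \in \{1,5,7,35\}$, and evaluating the formula of Theorem \ref{1.65} at each gives orders $(3,1,1,3)$ for $\eta(\tau)^2\eta(35\tau)^2$ and $(1,3,3,1)$ for $\eta(5\tau)^2\eta(7\tau)^2$ at $d = 1,5,7,35$ respectively. All of these are strictly positive, so each summand vanishes at every cusp; taking minima of orders termwise, their sum $f$ also vanishes at every cusp, whence $f \in S_2(\Gamma_0(35))$.

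Finally I would pin down $f = g$ with Sturm's bound. The index is $[\modgroup : \Gamma_0(35)] = 35\,(1+\tfrac15)(1+\tfrac17) = 48$, and since $-I \in \Gamma_0(35)$ the bound in Theorem \ref{sturmBound} is $\tfrac{2\cdot 48}{12} = 8$; by Remark \ref{SturmRmk}, two elements of $S_2(\Gamma_0(35))$ whose $q$-expansions agree through $q^8$ must coincide. I would therefore expand the infinite products through $q^8$ and compare with the coefficients $a_E(n)$ for $1 \le n \le 8$ obtained by point-counting on $E$ (equivalently, from the known newform labelled $35a$), matching these finitely many coefficients to conclude $f = g$. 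There is no conceptual obstacle here; the crux is entirely computational, and its two delicate points are (i) verifying positivity at \emph{all} four cusps in Theorem \ref{1.65}, since a single negative order would downgrade $f$ to a merely weakly holomorphic form, and (ii) carrying the product expansion far enough that Sturm's bound genuinely forces equality. The subtlety worth stressing is that agreement of leading terms alone cannot locate $f$ within the three-dimensional space $S_2(\Gamma_0(35))$; it is precisely Sturm's bound that promotes a finite coefficient match to the exact identity $f = g$.
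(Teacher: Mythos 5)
Your proposal is correct and follows essentially the same route as the paper: membership of both eta-quotients in $S_2(\Gamma_0(35))$ via Theorems \ref{1.64} and \ref{1.65} (your cusp orders $(3,1,1,3)$ and $(1,3,3,1)$ and the index $48$ giving a Sturm bound of $8$ all check out), uniqueness of the weight-$2$ newform coming from the single isogeny class of conductor $35$, and equality forced by Remark \ref{SturmRmk}. The only difference is inessential: the paper additionally computes $\dim_\C S_2(\Gamma_0(35)) = 3$ and exhibits a full eta-quotient basis $g_1, g_2, g_3$, which motivates how the combination was found but is not logically needed once the Sturm comparison is made, exactly as you observe.
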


We are also able to provide an example even when $S_2(\Gamma_0(pq))$ does not have a basis consisting of eta-quotients.
 
\begin{theorem}\label{N=55}
Let $E$ be an elliptic curve with conductor 55.  Then $E$ is associated via the Modularity Theorem  to the modular form $f(\tau) \in S_2(\Gamma_0(55))$ given by 
\[
f(\tau)= \sum_{i=1}^{40} c_i \frac{g_i(\tau)}{a(\tau)}, 
\]
where in Section \ref{ellcurves} the coefficients $c_i$ are given in Table \ref{coeffTab}, and the eta-quotients $g_i(\tau)/a(\tau)$ are given in Table \ref{etaTab}. 
\end{theorem}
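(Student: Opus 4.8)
The plan is to realize the newform attached to $E$ by clearing denominators into a higher weight space that \emph{is} spanned by eta-quotients, in contrast with the situation for $N=35$ in Theorem \ref{N=35}. By the Modularity Theorem (Theorem \ref{modularityTheorem}), every elliptic curve of conductor $55$ corresponds to a weight $2$ newform $f \in S_2(\Gamma_0(55))$ with rational integer Fourier coefficients; since all curves of conductor $55$ lie in a single isogeny class, this newform is unique, and it suffices to identify $f$ with its known $q$-expansion. The obstruction to expressing $f$ directly is that $S_2(\Gamma_0(55))$ is not spanned by holomorphic eta-quotients, so $f$ need not lie in the span of weight $2$ eta-quotients of level $55$.

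First I would fix a single holomorphic eta-quotient $a(\tau) = \prod_{\delta \mid 55}\eta^{s_\delta}(\delta\tau)$ of some positive weight $w$ and nebentypus $\chi$, chosen so that the product $fa$ lands in $S_{2+w}(\Gamma_0(55),\chi)$. Since $\gcd(55,6)=1$, Corollary \ref{etathm} lets me enumerate all eta-quotients of level $55$ lying in $M_{2+w}^!(\Gamma_1(55))$ purely via the two congruences of Theorem \ref{1.64}, and then impose holomorphy by requiring the orders of vanishing computed from Theorem \ref{1.65} to be nonnegative at each of the four cusps $\tfrac{1}{1},\tfrac{1}{5},\tfrac{1}{11},\tfrac{1}{55}$ of $\Gamma_0(55)$ recorded in \eqref{cuspreps}. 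Restricting to those eta-quotients $g_i$ whose nebentypus equals $\chi$ produces the finite list $g_1,\dots,g_{40}$ of Table \ref{etaTab}; for each, the quotient $g_i/a$ is a weight $2$ (generally weakly holomorphic) eta-quotient lying in $M_2^!(\Gamma_0(55))$ with trivial character, so any combination of the $g_i/a$ lies in $M_2^!(\Gamma_0(55))$.

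Next I would compute the $q$-expansions of $fa$ and of each $g_i$ to precision exceeding the Sturm bound for $M_{2+w}(\Gamma_0(55))$ from Theorem \ref{sturmBound}, and solve the resulting finite linear system $fa = \sum_{i=1}^{40} c_i g_i$ for the coefficients $c_i$ listed in Table \ref{coeffTab}. A solution exists precisely because $fa$ lies in the span of the $g_i$; dividing by $a$ then yields $f = \sum_{i=1}^{40} c_i\, g_i/a$ as claimed. Finally, because both $fa$ and $\sum c_i g_i$ belong to the same finite dimensional space $M_{2+w}(\Gamma_0(55),\chi)$, Remark \ref{SturmRmk} upgrades the numerical agreement of $q$-expansions past the Sturm bound to an exact identity, completing the verification.

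The hard part is the selection of $a$ and the target weight $2+w$: one must guarantee not merely that many holomorphic eta-quotients $g_i$ exist, but that their span actually contains $fa$, while simultaneously ensuring that every $g_i/a$ remains an eta-quotient of weight $2$. This is exactly the feature that fails at weight $2$ for level $55$, and is the reason the naive approach of Theorem \ref{N=35} does not apply here; the remaining steps, namely the lattice-point enumeration of admissible eta-quotients and the linear algebra over $\Q$, are routine once a workable $a$ has been found.
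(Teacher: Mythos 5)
Your proposal is correct and follows essentially the same route as the paper's proof: the paper likewise multiplies $f$ by an eta-quotient to push the product into a higher-weight space spanned by eta-quotients, solves the resulting linear system to precision past the Sturm bound, and divides back by $a$ to obtain the weakly holomorphic weight-$2$ decomposition. The ``hard part'' you flag is resolved in the paper by the concrete choice $a(\tau)=\eta(\tau)^3\eta(5\tau)^3\eta(11\tau)^3\eta(55\tau)^3\in S_6(\Gamma_0(55))$ (with trivial character), so that $f(\tau)a(\tau)\in S_8(\Gamma_0(55))$, a space of dimension $40$ for which the forms $g_1,\dots,g_{40}$ of Table \ref{etaBasis} are verified to be a basis, which guarantees that $fa$ lies in their span.
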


\subsection{Outline of the rest of the paper}
The rest of the paper is devoted to proving our main theorems, and surrounding discussions. In Section \ref{proof1}, we prove Theorem \ref{Gamma0Thm}.  In Section \ref{primelevel}, we prove Theorem \ref{correctThm}, while in Section \ref{semiprimelevel}, we prove Theorem \ref{mastersemi}, discussing also squarefree level cases. Finally, in Section \ref{ellcurves}, we prove Theorems \ref{N=35} and \ref{N=55}.  

\section{eta-quotients on $\Gamma(N)$ and $\Gamma_0(N)$.}\label{proof1}
In this section we prove Theorem \ref{Gamma0Thm}.  We begin with a lemma.

\begin{lemma}\label{Gamma0Lem}
Fix a positive integer $N$ with $\gcd(N,6)=1$, and suppose $f(\tau)=\prod_{\delta\mid N}\eta^{r_{\delta}}(\delta\tau)$ has the property that $f \in M_{k_f}^!(\Gamma(N))$, where $k_f= \frac12\sum_{\delta\mid N} r_\delta$, and
\begin{equation}
\sum_{\delta\mid N}\delta r_{\delta}\equiv t\pmod{24}.
\end{equation}
Then if $g(\tau)=\prod_{\delta\mid N}\eta^{\rho_\delta}(\delta\tau)$ is any eta-quotient of level $N$ satisfying
\begin{equation}
\sum_{\delta\mid N}\delta \rho_\delta\equiv -t\pmod{24},
\end{equation}
we must have that $g\in M_{k_g}^!(\Gamma(N))$, where $k_g=\frac{1}{2}\sum_{\delta\mid N}\rho_\delta $.
\end{lemma}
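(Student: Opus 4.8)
The plan is to prove the lemma by passing to the product $fg$ and then dividing out $f$. The hypotheses have been arranged precisely so that the exponent data of $fg$ satisfies the congruence conditions of Theorem~\ref{1.64}; this forces $fg$ to be modular for $\Gamma_0(N)$, hence for $\Gamma(N)$, and then $g = (fg)/f$ inherits $\Gamma(N)$-modularity since $f$ is already modular for $\Gamma(N)$ and eta-quotients never vanish on $\mathbb{H}$.

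First I would set $h(\tau) = f(\tau)g(\tau) = \prod_{\delta\mid N}\eta^{r_\delta+\rho_\delta}(\delta\tau)$, an eta-quotient of level $N$ and weight $k_h = k_f + k_g = \tfrac12\sum_{\delta\mid N}(r_\delta+\rho_\delta)$. By hypothesis
\[
\sum_{\delta\mid N}\delta(r_\delta+\rho_\delta) = \sum_{\delta\mid N}\delta r_\delta + \sum_{\delta\mid N}\delta\rho_\delta \equiv t + (-t) \equiv 0 \pmod{24},
\]
so $h$ satisfies the first congruence condition of Theorem~\ref{1.64}. Since $\gcd(N,6)=1$, Remark~\ref{1.64reduct} shows the two congruence conditions of Theorem~\ref{1.64} are equivalent, whence $\sum_{\delta\mid N}\frac{N}{\delta}(r_\delta+\rho_\delta)\equiv 0\pmod{24}$ holds as well. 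Theorem~\ref{1.64} then gives $h\in M_{k_h}^!(\Gamma_0(N),\chi)$ for the associated character $\chi$, and since $M_{k_h}^!(\Gamma_0(N),\chi)\subseteq M_{k_h}^!(\Gamma_1(N))\subseteq M_{k_h}^!(\Gamma(N))$ (the nebentypus is trivial on $\Gamma(N)$, and $\Gamma(N)\subseteq\Gamma_1(N)$), I conclude $h\in M_{k_h}^!(\Gamma(N))$.

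Next I would divide. For every $A = \abcdmatrix\in\Gamma(N)$ we have $h(A\tau) = (c\tau+d)^{k_h}h(\tau)$ and, by the hypothesis $f\in M_{k_f}^!(\Gamma(N))$, also $f(A\tau) = (c\tau+d)^{k_f}f(\tau)$. As $\eta$ is nonvanishing on $\mathbb{H}$, the eta-quotient $f$ has no zeros on $\mathbb{H}$, so $g = h/f$ is holomorphic there and
\[
g(A\tau) = \frac{h(A\tau)}{f(A\tau)} = \frac{(c\tau+d)^{k_h}h(\tau)}{(c\tau+d)^{k_f}f(\tau)} = (c\tau+d)^{k_h-k_f}g(\tau) = (c\tau+d)^{k_g}g(\tau).
\]
Because $g$ is itself an eta-quotient, it is meromorphic of finite order at every cusp, hence weakly holomorphic; combined with the transformation law just established, this yields $g\in M_{k_g}^!(\Gamma(N))$, as desired.

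The step I expect to require the most care is the division, specifically the bookkeeping of the automorphy factor when $k_f$, $k_g$, and $k_h$ are half-integers: one must fix a single branch convention for $(c\tau+d)^{w}$ (equivalently, work with a consistent multiplier system for $\eta$) so that $(c\tau+d)^{k_h}/(c\tau+d)^{k_f} = (c\tau+d)^{k_g}$ holds with no stray root of unity. The content of Theorem~\ref{1.64} is exactly that $h$ transforms with trivial multiplier on $\Gamma(N)$ and $f$ does so by assumption, so the quotient multiplier is forced to be trivial; the only genuine work is confirming that the branch conventions used in the cited theorem and in the hypothesis on $f$ are compatible.
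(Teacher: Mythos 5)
Your proposal is correct and follows essentially the same route as the paper: form the product $fg$, check that the hypotheses force $\sum_{\delta\mid N}\delta(r_\delta+\rho_\delta)\equiv 0\pmod{24}$, invoke Remark~\ref{1.64reduct} and Theorem~\ref{1.64} to get modularity of $fg$ on $\Gamma_0(N)$ with Nebentypus, observe the character is trivial on $\Gamma(N)$ (the paper does this by noting $\chi(d)=1$ when $d\equiv 1\pmod N$, you via the containment into $M^!_{k_h}(\Gamma_1(N))$ --- the same point), and divide by the nonvanishing $f$ to transfer the transformation law to $g$. Your added care about nonvanishing on $\mathbb{H}$, cusp behavior, and branch conventions for half-integral weights only makes explicit what the paper leaves implicit.
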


\begin{proof}
Observe that
\[
fg(\tau) = \prod_{\delta\mid N}\eta^{r_{\delta}+\rho_\delta}(\delta\tau),
\]
and by our hypotheses,
\[
\sum_{\delta\mid N}\delta (r_\delta+\rho_\delta)\equiv t-t\equiv 0\pmod{24}.
\]
Thus utilizing Remark \ref{1.64reduct}, we see that $fg(\tau)$ satisfies the conditions in Theorem \ref{1.64}. In particular we have that $fg \in M^!_k(\Gamma_0(N),\chi)$, where 
$k=\frac{1}{2}\sum_{\delta\mid N}\left(r_{\delta}+\rho_\delta \right)$, 
$\spc \chi(d)=\left(\frac{(-1)^{k}s}{d}\right)$, and $\spc s=\prod_{\delta\mid N}\delta^{r_\delta+\rho_\delta}$.

Note that when $d \equiv 1 \pmod{N}$, we have $\chi(d)=1$ since $\chi$ is a Dirichlet character modulo $N$.  Thus we see that  for every $A=\abcdmatrix\in \Gamma(N)$, $g=\frac{fg}{f}$ satisfies
\begin{equation}
g(A\tau)=\frac{(fg)(A\tau)}{f(A\tau)}=\frac{\chi(d)(c\tau+d)^{k}g(\tau)f(\tau)}{(c\tau+d)^{k_f}f(\tau)}=(c\tau+d)^{k-k_f}g(\tau).
\end{equation}
In particular, $g\in M_{k_g}^!(\Gamma(N))$. 
\end{proof}

We are now able to prove Theorem \ref{Gamma0Thm}.

\begin{proof}[Proof of Theorem \ref{Gamma0Thm}]
Since we are assuming that $\gcd(N,6)=1$, it suffices to show that any eta-quotient $f(\tau) =\prod_{\delta\mid N}\eta^{r_{\delta}}(\delta\tau) \in M_{k_f}^!(\Gamma(N))$ satisfies the first condition in Theorem \ref{1.64}, namely that $\sum_{\delta\mid N}\delta r_{\delta}\equiv 0 \pmod{24}$.  

The contrapositive to Lemma \ref{Gamma0Lem} states that if for some residue class $t$ modulo $24$, there exists an eta-quotient $g(\tau) = \prod_{\delta \mid N} \eta^{\rho_\delta}(\delta \tau) \not\in M_{k_g}^!(\Gamma(N))$ such that
\begin{equation}
  \sum_{\delta \mid N} \delta \rho_\delta \equiv t \pmod{24}, \label{Gamma0cond}
 \end{equation}
then for any $f(\tau) =\prod_{\delta\mid N}\eta^{r_{\delta}}(\delta\tau) \in M_{k_f}^!(\Gamma(N))$, we must have that $\sum_{\delta\mid N}\delta r_{\delta}\not\equiv -t\pmod{24}$.

Hence, if we can find such a $g(\tau)$ for each nonzero residue class $t$ modulo $24$, we are left to conclude that any $f(\tau) =\prod_{\delta\mid N}\eta^{r_{\delta}}(\delta\tau) \in M_{k_f}^!(\Gamma(N))$ must satisfy $\sum_{\delta\mid N}\delta r_{\delta}\equiv 0\pmod{24}$, as desired.

Fix $1\leq t \leq 23$, and consider $g_t(\tau) = \eta^t(\tau)$.  Certainly $g_t$ satisfies \eqref{Gamma0cond}, so we need only show that $g_t$ is not a weakly holomorphic modular form for any $\Gamma(N)$ with $\gcd(N,6)=1$.  Fix such an $N$, and let $A=\abcdmatrix\in \SL$.  The transformation properties of $\eta(\tau)$ are given by (see Knopp \cite{knopp}) 
\[
\eta(A\tau)=v(A)(c\tau+d)^{\frac{1}{2}}\eta(\tau),
\]
where
\[ 
v(A):=
\begin{cases}
\leg{d}{|c|}e^{\frac{i\pi}{12}\big((a+d)c+bd(c^2-1)-3c\big)}& \text{if } c\equiv 1\pmod{2}\\ \leg{c}{d}e^{\frac{i\pi}{12}\big((a+d)c+bd(c^2-1)+3d-3-3cd\big)}&\text{if }d\equiv1\pmod{2}.\end{cases}
\]
One way to show that $\eta^t(\tau)\not\in M_{t/2}^!(\Gamma(N))$, is to find a matrix $A =\abcdmatrix \in \Gamma(N)$ such that $v(A)^t \ne 1$.  Consider the matrix $\begin{psmallmatrix}1&0\\N&1\end{psmallmatrix} \in \Gamma(N)$.  Since $N$ is odd, $v(A)$ evaluates in the case $c\equiv 1\pmod{2}$ above.  Since $d=1$, the generalized Legendre symbol in this expression is trivial.  So we get that 
\[
v(A)^t = e^{\frac{-tN\pi i}{12}}.
\]
Thus $v(A)^t \ne 1$ if and only if $-tN$ is not divisible by $24$.  But this is certainly true by our choice of $t$ and the fact that $\gcd(N,6)=1$. 
\end{proof}

\section{Eta-quotients of Prime Level}\label{primelevel}
In this section our goal is to prove Theorem \ref{correctThm}, which classifies when eta-quotients can exist in $M_k^!(\Gamma_1(p))$ for $p$ prime.  And of course by Theorem \ref{Gamma0Thm} we know that eta-quotients in $M_k^!(\Gamma_1(p))$ are actually in $M_k^!(\Gamma_0(p), \chi)$ for some character $\chi$.  By \eqref{cuspreps}, there are two cusps of $\Gamma_0(p)$: $1$ and $1/p$.  We first state the following useful result from a recent paper by Arnold-Roksandich, James and Keaton \cite{ARJK}, but offer an alternative proof which we will see  in Section 4 generalizes to squarefree levels.

\begin{lemma}[\cite{ARJK}, Theorem 1.2]\label{hLemma}
Fix a prime $p\geq 5$ and let $h=\frac{1}{2}\gcd(p-1,24)$.  There exists $f(\tau)=\eta^{r_1}(\tau)\eta^{r_p}(p\tau)$ in $M_k^!\big(\Gamma_1(p)\big)$ if and only if $h\mid k.$
\end{lemma}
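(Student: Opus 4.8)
The plan is to convert the existence of such an eta-quotient into the solvability of a single linear congruence modulo $24$, and then read off the answer in terms of $\gcd(p-1,24)$. Since $p \ge 5$ we have $\gcd(p,6)=1$, so Corollary \ref{etathm} applies: the function $f(\tau)=\eta^{r_1}(\tau)\eta^{r_p}(p\tau)$ lies in $M_k^!(\Gamma_1(p))$ if and only if it satisfies the two divisor congruences of Theorem \ref{1.64} for $N=p$, namely $r_1 + p\,r_p \equiv 0 \pmod{24}$ and $p\,r_1 + r_p \equiv 0 \pmod{24}$. By Remark \ref{1.64reduct} (concretely, because $p^2 \equiv 1 \pmod{24}$), these two conditions are equivalent, so it suffices to retain the first. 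The weight is determined by $k = \tfrac12(r_1+r_p)$.

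First I would fix the weight by imposing $r_1 + r_p = 2k$, which simply encodes that $f$ has weight $k$, and eliminate $r_1 = 2k - r_p$ from the congruence $r_1 + p\,r_p \equiv 0 \pmod{24}$. This collapses the whole problem to the single linear congruence
\[
(p-1)\,r_p \equiv -2k \pmod{24}
\]
in the unknown integer $r_p$; any integer solution $r_p$ then yields a valid pair $(r_1,r_p)$ giving an element of $M_k^!(\Gamma_1(p))$ of weight $k$, and conversely any eta-quotient of the stated shape and weight in $M_k^!(\Gamma_1(p))$ supplies such a solution via its own exponents.

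The remaining step is elementary number theory: the congruence $(p-1)r_p \equiv -2k \pmod{24}$ admits an integer solution if and only if $\gcd(p-1,24)\mid 2k$. Since $p$ is odd, $p-1$ is even, so $\gcd(p-1,24)=2h$ with $h=\tfrac12\gcd(p-1,24)$ a positive integer, and $2h \mid 2k$ if and only if $h \mid k$; this gives both directions of the biconditional at once. I expect the only real friction to be the bookkeeping in confirming that the two Theorem \ref{1.64} conditions truly reduce to one under the constraint $r_1+r_p=2k$, and in noting that the weakly holomorphic setting $M_k^!$ imposes no positivity or integrality restriction on $r_1,r_p$ beyond solvability of the congruence (so, in particular, solvability already forces $2k$ to be divisible by the even number $2h$, and hence $k$ to be an integer multiple of $h$). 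Finally, I would flag that it is exactly this congruence-based formulation, rather than an argument that counts orders of vanishing at cusps, that will generalize to squarefree level in Section 4, where the single unknown $r_p$ becomes a vector $(r_\delta)_{\delta\mid N}$ and the scalar congruence becomes a linear system modulo $24$.
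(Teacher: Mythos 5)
Your proposal is correct and follows essentially the same route as the paper: both use Corollary \ref{etathm} together with Remark \ref{1.64reduct} to reduce membership in $M_k^!(\Gamma_1(p))$ to a single linear condition modulo $24$ under the constraint $r_1+r_p=2k$, and then conclude via solvability if and only if $\gcd(p-1,24)=2h$ divides $2k$. The only cosmetic difference is that the paper packages the congruence as the two-variable Diophantine equation $24v_1-(p-1)r_1=2k$ in the cusp order $v_1$ and exponent $r_1$, whereas you eliminate $r_1$ and solve $(p-1)r_p\equiv -2k \pmod{24}$ directly.
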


\begin{proof}
Suppose there exists $f(\tau)=\eta^{r_1}(\tau)\eta^{r_p}(p\tau)$ in $M_k^!\big(\Gamma_1(p)\big)$.  By Corollary \ref{etathm} we see that $f$ satisfies the conditions in Theorem \ref{1.65}, and thus  
\begin{equation}\label{primev1}
  v_1 = \frac1{24}(pr_1 + r_p).
\end{equation}
Since $k=\frac{1}{2}(r_1 + r_p)$, \eqref{primev1} is equivalent to 
 
\begin{equation}
 \label{dioeqn1}
 24v_1 - (p-1)r_1 = 2k.
\end{equation}

Equation \eqref{dioeqn1} can be viewed as a linear Diophantine equation in variables $v_1$ and $r_1$.  In this light, the existence of a solution implies $\gcd(p-1,24) \mid 2k$.  As $p \ne 2$, $p-1$ is even and so $2 \mid \gcd(24,p-1)$.  Therefore, $h = \frac{1}{2} \gcd(p-1,24)$ is an integer and $h \mid k$.

Conversely, if $h \mid k$, then there exists integers $v_1$, $r_1$ which give a solution to \eqref{dioeqn1}.  Plugging these into \eqref{primev1} gives an integer $r_p$.  From \eqref{primev1} we see the first condition of Theorem \ref{1.64} is satisfied, and so since $(p,6)=1$, Remark \ref{1.64reduct} implies that $f(\tau) \in M_k^!(\Gamma_1(p))$.
\end{proof}

Our proof of Theorem \ref{correctThm} will also rely on the following result of Rouse and Webb \cite{RW}.

\begin{theorem}[\cite{RW}, Theorem 2] \label{expbound}
Suppose that $f(\tau) = \prod_{\delta|N}\eta(\delta \tau)^{r_\delta} \in M_k(\Gamma_0(N))$.  Then we have
\[
 \sum_{\delta|N}|r_\delta| \le 2k \prod_{p | N} \left(\frac{p+1}{p-1}\right)^{\min\{2, \operatorname{ord}_p(N)\}}.
\]
\end{theorem}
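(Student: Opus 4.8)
The plan is to convert holomorphy into a system of linear inequalities on the exponent vector $(r_\delta)_{\delta\mid N}$ and then to bound $\sum_{\delta\mid N}|r_\delta|$ by linear programming. Since $f=\prod_{\delta\mid N}\eta(\delta\tau)^{r_\delta}$ lies in $M_k(\Gamma_0(N))$, it satisfies the hypotheses of Theorem~\ref{1.64}, so Theorem~\ref{1.65} applies and the order of vanishing at the cusp $c/d$ (for each $d\mid N$) is the linear form
\[
v_d=\frac{N}{24}\sum_{\delta\mid N}\frac{\gcd(d,\delta)^2}{\gcd(d,N/d)\,d\,\delta}\,r_\delta=:\sum_{\delta\mid N}A_{d\delta}r_\delta.
\]
Because $\eta$ is nonvanishing on $\mathbb H$, the form $f$ has no zeros or poles in the interior, so holomorphy is equivalent to the requirement that $v_d\ge 0$ for every $d\mid N$. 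Together with the weight normalization $\sum_{\delta\mid N}r_\delta=2k$, this identifies the admissible exponent vectors with the integer points of the polytope $P_k=\{\,r:\ Ar\ge 0,\ \mathbf 1^{\mathsf T}r=2k\,\}$, and it suffices to bound $\max_{r\in P_k}\sum_{\delta}|r_\delta|$.

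Writing $\sum_\delta|r_\delta|=\max_{\varepsilon\in\{\pm1\}^{D}}\sum_\delta\varepsilon_\delta r_\delta$, where $D$ is the number of divisors of $N$, reduces the problem to finitely many linear programs, one for each sign pattern $\varepsilon$. For a fixed $\varepsilon$ I would bound $\sum_\delta\varepsilon_\delta r_\delta$ by exhibiting a dual certificate: nonnegative multipliers $(\mu_d)_{d\mid N}$ and a scalar $y$ satisfying
\[
\sum_{d\mid N}\mu_d A_{d\delta}=y-\varepsilon_\delta\qquad\text{for every }\delta\mid N.
\]
Given such data, the identity $\sum_\delta\varepsilon_\delta r_\delta=y\sum_\delta r_\delta-\sum_d\mu_d v_d$ together with $\mu_d\ge0$ and $v_d\ge 0$ yields $\sum_\delta\varepsilon_\delta r_\delta\le 2ky$; minimizing $y$ over feasible certificates gives, by LP duality, the exact maximum. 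Exhibiting any finite feasible $y$ for every $\varepsilon$ simultaneously proves that $P_k$ is bounded and furnishes the bound. In the base case $N=p$ one checks directly that the worst sign pattern is $\varepsilon=(+1,-1)$ and that the optimal certificate has $y=\tfrac{p+1}{p-1}$, recovering the constant $2k\cdot\tfrac{p+1}{p-1}$.

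For general $N$ the decisive feature is that $A_{d\delta}$ factors multiplicatively over the primes dividing $N$: writing $d=\prod_p p^{a_p}$ and $\delta=\prod_p p^{b_p}$, each entry is a product of local factors depending only on the pair $(a_p,b_p)$, so the constraint matrix is a tensor product of the per-prime matrices. The plan is to construct the optimal certificate as a product of local certificates, prime by prime, which should make the minimal $y$ factor as $\prod_{p\mid N}y_p$ and explain the product shape of the bound. The truncation $\min\{2,\operatorname{ord}_p(N)\}$ reflects that, once a prime appears to the second power, the local worst case already saturates at $y_p=\bigl(\tfrac{p+1}{p-1}\bigr)^2$ and adjoining further powers of $p$ cannot help the maximizing exponent vector. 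The main obstacle is precisely this last step: verifying the prime-power computation (including the saturation at exponent two) and proving that the adversary's optimal sign pattern and the optimal multipliers respect the tensor factorization, so that the global optimum is genuinely the product of the local ones. This is a finite but delicate optimization over the divisor lattice, and is where I expect essentially all of the work to lie.
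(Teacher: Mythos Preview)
The paper does not prove Theorem~\ref{expbound} at all: it is stated as a citation of Theorem~2 in Rouse and Webb~\cite{RW} and used as a black box in the proofs of Theorems~\ref{correctThm} and~\ref{mastersemi}. There is therefore no proof in the paper to compare your proposal against.

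As for the proposal itself, your outline---encode holomorphy at cusps as linear inequalities via Theorem~\ref{1.65}, maximize the $\ell^1$ norm over the resulting polytope by LP duality, and exploit the multiplicative structure of the matrix $A_{d\delta}$ to factor the optimum prime by prime---is in spirit the approach of the original Rouse--Webb argument. However, what you have written is a plan rather than a proof: you explicitly flag the two substantive steps (the prime-power local computation with its saturation at exponent~$2$, and the claim that the global optimum genuinely decomposes as a product of local optima) as unfinished, and these are precisely where the content of the Rouse--Webb proof resides. One small technical remark: you assert at the outset that membership in $M_k(\Gamma_0(N))$ implies the hypotheses of Theorem~\ref{1.64}, which is not immediate in general (the paper's Theorem~\ref{Gamma0Thm} establishes this only when $\gcd(N,6)=1$); the order-of-vanishing formula at cusps nonetheless holds for eta-quotients directly from the transformation law of $\eta$, so this does not damage the argument, but the justification you give is not the right one.
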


\begin{remark}\label{RWrmk}
The proof of this theorem carries through for spaces $M_k(\Gamma_0(N), \chi)$ with character.  Thus in light of Corollary \ref{etathm}, Theorem \ref{expbound} holds for any eta-quotient in $M_k(\Gamma_1(N))$. 
\end{remark}


We also require two additional lemmas which will allow us to deal with certain cases in the proof of Theorem \ref{correctThm}.  These lemmas both deal with the value $k(p+1)/12$, where $p\geq 5$ is prime, and $k$ is an even integer such that $h=\frac{1}{2}\gcd(p-1,24) \mid k$.  Note that in this case $k(p+1)/12$ must be an integer, since if $3$ doesn't divide $p+1$, then $3$ must divide $p-1$, and so $3$ divides $h$ and thus $k$.  

\begin{lemma}\label{evenexistence}
Fix a prime $p\geq 5$, set $h=\frac{1}{2}\gcd(p-1,24)$, and let $k$ be a positive even integer.  If ${k(p+1)/12}$ is even, then there exists an eta-quotient in $M_k(\Gamma_1(p))$.
\end{lemma}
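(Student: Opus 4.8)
The plan is to exhibit a single explicit eta-quotient rather than argue existence abstractly. Any candidate $f(\tau) = \eta^{r_1}(\tau)\eta^{r_p}(p\tau)$ of weight $k$ must satisfy $r_1 + r_p = 2k$, and by Corollary \ref{etathm} together with Theorem \ref{1.65} its only relevant orders of vanishing occur at the two cusps $1$ and $1/p$ of $\Gamma_0(p)$ recorded in \eqref{cuspreps}. First I would write down those two orders,
\[
v_1 = \tfrac{1}{24}(p r_1 + r_p), \qquad v_p = \tfrac{1}{24}(r_1 + p r_p),
\]
and note the key identity $v_1 + v_p = \tfrac{1}{24}(p+1)(r_1 + r_p) = k(p+1)/12$. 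Thus the total order of vanishing across the two cusps is exactly the quantity appearing in the hypothesis, and membership in $M_k(\Gamma_1(p))$ reduces to distributing this fixed total as two nonnegative integers while meeting the congruence of Theorem \ref{1.64}.

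The symmetric choice accomplishes both at once, and this is the heart of the argument. I would take $r_1 = r_p = k$, i.e. $f(\tau) = \eta^k(\tau)\eta^k(p\tau)$, which has weight $k$ and forces $v_1 = v_p = k(p+1)/24$. The single congruence of Theorem \ref{1.64} then reads $\sum_{\delta \mid p}\delta r_\delta = k(p+1) \equiv 0 \pmod{24}$; since $\gcd(p,6)=1$, Remark \ref{1.64reduct} and Corollary \ref{etathm} make this the only requirement for $f \in M_k^!(\Gamma_1(p))$. The hypothesis that $k(p+1)/12$ is even is precisely the statement that $k(p+1)/24$ is an integer, i.e. that $24 \mid k(p+1)$, so this congruence holds and $f$ is weakly holomorphically modular on $\Gamma_1(p)$.

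It then remains to upgrade weakly holomorphic to holomorphic. Here one must be slightly careful: holomorphicity means the invariant order of vanishing is nonnegative at every cusp, and by Corollary \ref{etathm} it suffices to check the two $\Gamma_0(p)$-cusps. For the symmetric form both orders equal $v_1 = v_p = k(p+1)/24$, which is a nonnegative integer since $k > 0$ and $p \ge 5$. Hence $f$ vanishes to nonnegative order at each cusp, so $f \in M_k(\Gamma_1(p))$, which completes the construction.

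The step I expect to be the main (if modest) obstacle is recognizing that the balanced assignment $r_1 = r_p = k$ is the right one to try: it simultaneously collapses the modularity congruence to exactly the hypothesis that $k(p+1)/12$ is even, and makes the two cusp orders automatically equal and nonnegative. An unbalanced choice would instead force one to solve the congruence $(p-1)r_1 \equiv -2k \pmod{24}$ (cf. \eqref{primev1}) and then separately verify nonnegativity at both cusps, whereas the symmetric form sidesteps this bookkeeping entirely. The only genuine content is the translation of ``$k(p+1)/12$ even'' into ``$24 \mid k(p+1)$'' and the observation that nonnegativity of the orders at the $\Gamma_0(p)$-cusps is exactly holomorphicity.
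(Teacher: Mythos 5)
Your proposal is correct and takes essentially the same route as the paper: both exhibit the symmetric eta-quotient $f(\tau)=\eta^k(\tau)\eta^k(p\tau)$, translate the hypothesis that $k(p+1)/12$ is even into the congruence $k(p+1)\equiv 0\pmod{24}$ required by Theorem \ref{1.64} (via Remark \ref{1.64reduct}), and conclude holomorphy from $v_1=v_{1/p}=k(p+1)/24\geq 0$ computed with Theorem \ref{1.65}. The only difference is expository: the paper states the verification tersely, while you spell out the reduction to the two $\Gamma_0(p)$-cusps and the equivalence of the hypothesis with $24\mid k(p+1)$.
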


\begin{proof}
Consider the eta-quotient
 \[
 f(\tau) = \eta^k(\tau)\eta^k(p\tau).
 \]
Since $k(p+1)/12$ is even, and by Remark \ref{1.64reduct}, we see that $f(\tau)$ satisfies the conditions of Theorem \ref{1.64}, and so $f(\tau) \in M_k^!(\Gamma_1(p))$.  Moreover, by Theorem \ref{1.65}, 
 \begin{equation}
  v_1 = v_{1/p} = \frac{k(p+1)}{24},
 \end{equation}
which is a positive integer. Thus $f(\tau) \in M_k(\Gamma_1(p))$.
\end{proof}

\begin{remark}
 Note that we did not need the assumption that $h \mid k$ in the previous lemma.  This is because when $k(p+1)/12$ is even, we must have $h \mid k$.  To see this, observe that if $k(p+1)/12 = 2n$, then $2k = 24n - k(p-1)$, and so $\gcd(24,p-1) \mid 2k$.
\end{remark}

The last lemma is a simple divisibility argument.

\begin{lemma}\label{evenlemma}
Let $p$ be prime, $h=\frac{1}{2}\gcd(p-1,24)$, and $k$ be an even integer. If $h\mid k$ and $k(p+1)/12$ is odd, then $(p-1)/2h$ is odd. 
\end{lemma}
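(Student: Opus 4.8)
The plan is to reduce the conclusion to a single condition on the $2$-adic valuation of $p-1$, and then to extract that condition from the two hypotheses by valuation bookkeeping; throughout I write $\operatorname{ord}_2$ for the $2$-adic valuation. The first observation is that $2h = \gcd(p-1,24)$, so that $(p-1)/(2h) = (p-1)/\gcd(p-1,24)$. Since $\operatorname{ord}_2(24)=3$, we have $\operatorname{ord}_2(\gcd(p-1,24)) = \min(\operatorname{ord}_2(p-1),3)$, and therefore $\operatorname{ord}_2\!\big((p-1)/(2h)\big) = \operatorname{ord}_2(p-1) - \min(\operatorname{ord}_2(p-1),3)$. This quantity vanishes (equivalently, $(p-1)/(2h)$ is odd) exactly when $\operatorname{ord}_2(p-1) \le 3$. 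So the lemma is equivalent to the implication: if $h \mid k$ and $k(p+1)/12$ is odd, then $\operatorname{ord}_2(p-1) \le 3$ (in fact I expect to prove the sharper bound $\operatorname{ord}_2(p-1)\le 2$).

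Next I would unpack the oddness hypothesis. Since $p$ is odd and $k$ is even, both $p+1$ and $k$ are even, so $\operatorname{ord}_2(k)\ge 1$ and $\operatorname{ord}_2(p+1)\ge 1$. As noted before the lemma, $k(p+1)/12$ is an integer, so its being odd means $0 = \operatorname{ord}_2\!\big(k(p+1)/12\big) = \operatorname{ord}_2(k) + \operatorname{ord}_2(p+1) - 2$. Hence $\operatorname{ord}_2(k) + \operatorname{ord}_2(p+1) = 2$ with both summands at least $1$, which forces $\operatorname{ord}_2(k)=1$ (and $\operatorname{ord}_2(p+1)=1$).

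Finally I would feed this into the divisibility hypothesis $h \mid k$, which gives $\operatorname{ord}_2(h) \le \operatorname{ord}_2(k) = 1$. Since $h = \tfrac12\gcd(p-1,24)$, we have $\operatorname{ord}_2(h) = \min(\operatorname{ord}_2(p-1),3) - 1$, so $\min(\operatorname{ord}_2(p-1),3) \le 2$, which forces $\operatorname{ord}_2(p-1)\le 2\le 3$. By the reduction in the first paragraph, $(p-1)/(2h)$ is odd, as claimed. There is no serious obstacle here: the statement is a bookkeeping exercise in $2$-adic valuations, and the only points requiring care are the identity $\operatorname{ord}_2(\gcd(p-1,24)) = \min(\operatorname{ord}_2(p-1),3)$ and the fact that both $k$ and $p+1$ being even pins the sum $\operatorname{ord}_2(k)+\operatorname{ord}_2(p+1)=2$ down to $\operatorname{ord}_2(k)=1$ rather than leaving the split ambiguous.
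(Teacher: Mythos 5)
Your proof is correct for odd $p$ and is, in substance, the same $2$-power bookkeeping as the paper's, just run in the contrapositive direction: the paper supposes $(p-1)/2h$ even and uses the coprimality of $(p-1)/2h$ and $12/h$ to force $12/h$ odd, hence $4 \mid h$, $4 \mid k$, and $8 \mid k(p+1)$, contradicting oddness of $k(p+1)/12$, whereas you run the implication forward in valuation language, pinning $\operatorname{ord}_2(k)=1$ from the oddness hypothesis and then $\operatorname{ord}_2(h)\le 1$ from $h \mid k$, which even yields the slightly sharper conclusion $\operatorname{ord}_2(p-1)\le 2$ (and, combined with $\operatorname{ord}_2(p+1)=1$, that $p \equiv 5 \pmod{8}$, consistent with where the lemma is applied). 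The one caveat is that the lemma as stated allows $p=2$, which the paper disposes of in a separate one-line remark, while your second paragraph silently assumes $p$ odd; for $p=2$ the identity $\operatorname{ord}_2(k)+\operatorname{ord}_2(p+1)=2$ no longer forces $\operatorname{ord}_2(k)=1$ since $p+1=3$ is odd, but this is harmless because your own first-paragraph reduction already settles that case trivially, as $\operatorname{ord}_2(p-1)=\operatorname{ord}_2(1)=0\le 3$.
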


\begin{proof}
We first note that when $p=2$, $(p-1)/2h = 1$ so is odd regardless of $k$.  We now let $p \geq 3$.  Suppose $(p-1)/2h$ is even.  By definition of $h$, $(p-1)/2h$ and $12/h$ are relatively prime, so $12/h$ must be odd.   But then $12$ is an odd integer times $h$, so we have that $4 \mid h$, and so $4\mid k$.  Thus $8\mid k(p+1)$, since $p$ is odd.  Therefore $k(p+1)/12$ is even, which contradicts our assumptions.  
\end{proof}

We are now able to prove Theorem \ref{correctThm}.

\begin{proof}[Proof of Theorem  \ref{correctThm}]

First, we assume that $f(\tau) = \eta^{r_1}(\tau)\eta^{r_p}(p\tau)\in M_k(\Gamma_1(p))$.  By Lemma \ref{hLemma}, we have that $h \mid k$.  To complete this direction of the proof, we show that there are no eta-quotients in $M_2(\Gamma_1(p))$ when $p \equiv 5$ (mod $24$) and $p > 5$.  To do this, we recall Remark \ref{RWrmk}, and employ Theorem \ref{expbound}.  Fix $p \equiv 5$ (mod $24$) with $p > 5$, and suppose $f(\tau) = \eta^{r_1}(\tau)\eta^{r_p}(p\tau)\in M_2(\Gamma_1(p))$.  Theorem \ref{expbound} gives us that
\[
 |r_1|+|r_p| \leq 4\left(\frac{p+1}{p-1}\right).
\]
This upper bound decreases as $p$ increases, so will be largest when $p = 29$, which gives a bound less than 5. Since $r_1$ and $r_p$ are integers, we have
\begin{equation}
 \label{primelevelRW2}
 |r_1|+|r_p| \leq 4.
\end{equation}
Moreover, by Theorem \ref{1.65} we have
\begin{align*}
 24v_1 &= r_1+pr_p \\
 24v_{1/p}  &= pr_1+r_p. 
\end{align*}
Since $f(\tau) \in M_k(\Gamma_1(p))$, it must be that $v_1,v_{1/p}\geq 0$.  This guarantees that $r_1,r_p \geq 0$.  Namely, if $r_1<0$, then using \eqref{primelevelRW2}, we see that
\[
 24v_{1/p} \leq -29|r_1| + |r_p| <0,
\]
which contradicts that $v_{1/p}\geq 0$.  Similarly, $r_p<0$ implies that $v_1 < 0$.  

Since $p \equiv 5$ (mod $24$), Corollary \ref{etathm} implies that
\begin{equation}
 \label{prime1.64failure}
 r_1+5r_p \equiv 0 \pmod{24}.
\end{equation}
However, there is no pair of non-negative integers $r_1,r_p$, not both zero, that satisfy both (\ref{primelevelRW2}) and (\ref{prime1.64failure}).  Thus we have a contradiction and so no such eta-quotient $f(\tau)$ can exist.


We now prove the converse by construction.  Suppose that $p\geq 5$ is a prime and $k$ an even integer such that $h=\frac{1}{2}\gcd(p-1,24) \mid k$.  We construct an eta-quotient in $M_k(\G_1(p))$ for every such case of $p$ and $k$ except when $k = 2$ and $p \equiv 5 \pmod{24}$, with $p > 5$.

We first consider the case where $p \not\equiv 5$ (mod $8$).  By Lemma \ref{evenexistence}, it suffices to prove that $k(p+1)/12$ is even, which since $k(p+1)/12$ is an integer, means showing that $8\mid k(p+1)$.  We already know that both $k$ and $p+1$ are even, so it suffices to show that either $k$ or $p+1$ is divisible by $4$.  If $p \equiv 3 \pmod 4$, then $4 \mid p+1$ so we are done.  If 
not, then $p \equiv 1 \pmod{8}$ and we have that $4 \mid h$ and so $4\mid k$.

We now consider the case when $p \equiv 5 \pmod{8}$.  In this case either $p \equiv 13 \pmod{24}$ or $p\equiv 5 \pmod{24}$.  We will show the existence of an eta-quotient in $M_h(\Gamma_1(p))$, since if $f(\tau)\in M_h(\Gamma_1(p))$, then $f^{k/h}(\tau) \in M_k(\Gamma_1(p))$.  Suppose that $p \equiv 13$ (mod $24$).  Then $h = 6$, and using Theorems \ref{1.64} and \ref{1.65} one can quickly check that
\[
\eta^9(\tau)\eta^3(p\tau) \in M_6(\Gamma_1(p)).
\]
Next, when $p = 5$ we have $h = 2$, and so
\[
\frac{\eta^5(p\tau)}{\eta(\tau)} \in M_2(\Gamma_1(p)).
\]
Finally, suppose $p \equiv 5$ (mod $24$), with $p \ne 5$ and $k \neq 2$.  It is sufficient to show that there exist eta-quotients $f_4(\tau) \in M_4(\Gamma_1(p))$ and $f_6(\tau) \in M_6(\Gamma_1(p))$ since either $4 \mid k$ or $4 \mid (k-6)$ and thus either $f_4^{k/4}\in M_k(\Gamma_1(p))$ or $f_4^{(k-6)/4}f_6\in M_k(\Gamma_1(p))$. We check using Theorems \ref{1.64} and \ref{1.65} that
\begin{align*}
 f_4(\tau) &:= \eta^4(\tau)\eta^4(p\tau) \in M_4(\Gamma_1(p))\\
 \intertext{and}
 f_6(\tau) &:= \eta^9(\tau)\eta^3(p\tau) \in M_6(\Gamma_1(p)),
\end{align*}
which resolves the final case. 


\end{proof}

\section{Eta-quotients of Semiprime and Squarefree Level}\label{semiprimelevel}\label{section4}
In this section, we generalize the results of Section 3 to semiprime level, and further to squarefree levels where possible.  We begin with a squarefree generalization of Lemma \ref{hLemma} which extends a result of Arnold-Roksandich, James, and Keaton \cite[Thoerem 5.8]{ARJK}.

\begin{theorem}\label{sfhlemma}
Let $N=p_1\cdots p_t$, a product of distinct primes with $p_i \geq 5$.  Define $h = \frac{1}{2} \gcd(p_1-1, \ldots, p_t-1, 24)$. There exists $f(\tau) = \prod_{\delta \mid N} \eta^{r_\delta}(\delta \tau) \in M_k^!(\Gamma_1(N))$ if and only if $h \mid k$.
\end{theorem}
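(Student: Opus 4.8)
The plan is to mirror the structure of the proof of Lemma \ref{hLemma}, upgrading the single-prime Diophantine analysis to a congruence computation over the lattice of exponents indexed by the divisors of $N$. Since every $p_i \geq 5$, we have $\gcd(N,6)=1$, so Corollary \ref{etathm} applies: an eta-quotient $f(\tau) = \prod_{\delta \mid N}\eta^{r_\delta}(\delta\tau)$ lies in $M_k^!(\Gamma_1(N))$ if and only if it satisfies the conditions of Theorem \ref{1.64}, which by Remark \ref{1.64reduct} collapse to the single congruence $\sum_{\delta \mid N}\delta r_\delta \equiv 0 \pmod{24}$. Because we are working with \emph{weakly} holomorphic forms, no positivity conditions at the cusps intervene, so only this congruence together with the weight relation $k = \tfrac12\sum_{\delta \mid N} r_\delta$ matter. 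Thus the existence of the desired $f$ is equivalent to the solvability in integers $(r_\delta)_{\delta \mid N}$ of the system $\sum_{\delta \mid N} r_\delta = 2k$ and $\sum_{\delta \mid N} \delta r_\delta \equiv 0 \pmod{24}$.

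Next I would eliminate the weight constraint by treating the exponent $r_1$ of $\eta(\tau)$ as a slack variable: setting $r_1 = 2k - \sum_{\delta > 1} r_\delta$ satisfies the weight equation automatically and rewrites the congruence as $\sum_{\delta \mid N,\, \delta > 1}(\delta - 1)r_\delta \equiv -2k \pmod{24}$. As the remaining $r_\delta$ range over all integers, the left-hand side ranges over the subgroup of $\Z/24\Z$ generated by $\{\delta - 1 : \delta \mid N,\ \delta > 1\}$, that is, over the multiples of $g := \gcd\left(24, \{\delta - 1 : \delta \mid N,\ \delta > 1\}\right)$. Hence the system is solvable if and only if $g \mid 2k$.

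The crux is then a gcd lemma asserting $g = \gcd(24, p_1 - 1, \ldots, p_t - 1)$, so that $g = 2h$ and $g \mid 2k \iff h \mid k$. One inclusion is immediate, since the primes themselves are among the divisors $\delta > 1$, forcing $g$ to divide $\gcd(24, p_1 - 1, \ldots, p_t - 1)$. For the reverse, writing $g_0 = \gcd(24, p_1 - 1, \ldots, p_t - 1)$, each prime satisfies $p_i \equiv 1 \pmod{g_0}$, so any divisor $\delta = p_{i_1}\cdots p_{i_r}$, being a product of residues $\equiv 1$, itself satisfies $\delta \equiv 1 \pmod{g_0}$; thus $g_0 \mid \delta - 1$ for every divisor $\delta > 1$, giving $g_0 \mid g$. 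This multiplicativity observation — that the congruence $\equiv 1 \pmod{g_0}$ is preserved under products of prime divisors — is the main (if short) obstacle, and is precisely what guarantees that composite divisors contribute nothing new to the gcd.

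Finally I would assemble the two directions. If such an $f$ exists, its exponents solve the system, forcing $g \mid 2k$ and hence $h \mid k$; integrality of $h$ follows since each $p_i - 1$ is even, making $g$ even. Conversely, if $h \mid k$ then $g = 2h \mid 2k$, so the congruence is solvable, producing integer exponents $(r_\delta)$ and therefore an eta-quotient $f \in M_k^!(\Gamma_1(N))$, which completes the equivalence.
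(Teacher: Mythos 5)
Your proposal is correct and takes essentially the same route as the paper's proof: both use Corollary \ref{etathm} and Remark \ref{1.64reduct} to reduce existence to the solvability of one linear congruence modulo $24$ after eliminating $r_1$ via the weight relation, and both hinge on the identity $\gcd\left(24, \{\delta - 1 : \delta \mid N\}\right) = \gcd(24, p_1-1, \ldots, p_t-1) = 2h$. The only cosmetic differences are that the paper phrases the congruence as a Diophantine equation involving the cusp order $v_{1/N}$ from Theorem \ref{1.65}, and proves the gcd identity via $ab-1 = (a-1)(b-1) + (a-1) + (b-1)$ rather than your equivalent observation that products of residues congruent to $1$ modulo $g_0$ remain congruent to $1$.
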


\begin{proof}
Suppose there exists $f(\tau) = \prod_{\delta \mid N} \eta^{r_\delta}(\delta \tau) \in M_k^!(\Gamma_1(N))$.  By Corollary \ref{etathm} we see that $f$ satisfies the conditions in Theorem \ref{1.65}, and thus we can compute 
\begin{equation}
v_{1/N} =  \frac{1}{24} \sum_{\delta \mid N} \delta r_\delta. \label{squarefreevn}
\end{equation}
Since $k=\frac{1}{2}\sum_{\delta \mid N} r_\delta$, \eqref{squarefreevn} is equivalent to 
 
\begin{equation} \label{squarefreedio}
2k = 24v_{1/N} - \sum_{\delta \mid N} (\delta-1)r_\delta.
\end{equation}

Equation \eqref{squarefreedio} can be viewed as a linear Diophantine equation in the variables $v_{1/N}$, and $r_\delta$ for each $\delta \mid N$ with $\delta\neq 1$.  Thus, the existence of an integer solution to \eqref{squarefreedio} implies that
\begin{equation}\label{sqfreesoln}
\gcd_{\delta\mid N}(\delta-1,24) \mid 2k.
\end{equation}
However, we note that 
\begin{equation}\label{sqfreeh}
\gcd_{\delta\mid N}(\delta-1,24) = \gcd(p_1-1, \ldots, p_t-1, 24) = 2h,
\end{equation}
which follows from the fact that $ab-1=  (a-1)(b-1)+(a-1)+(b-1)$, so if $d\mid a-1$ and $d\mid b-1$, then $d\mid (ab-1)$.   As each $p_i$  is odd,  we have that $2 \mid \gcd(p_1-1, \ldots, p_t-1, 24)$.  Therefore, $h$ is an integer and we see from \eqref{sqfreesoln} and \eqref{sqfreeh} that $h \mid k$ as desired.

Conversely, if $h \mid k$, then there exists integers $v_{1/N}$, and $r_\delta$ for each $\delta \mid N$ with $\delta\neq 1$, which give a solution to \eqref{squarefreedio}.  Additionally defining
\[
r_1 = 2k - \sum_{\substack{\delta \mid N \\ \delta \ne 1}} r_\delta,
\]
gives that $k=\frac{1}{2}\sum_{\delta \mid N} r_\delta$.  Thus from \eqref{squarefreedio} we see that 
\[
\sum_{\delta \mid N} \delta r_\delta = 24 v_{1/N} \equiv 0 \pmod{24},
\]
and so by Remark \ref{1.64reduct}, $f(\tau) := \prod_{\delta \mid N} \eta^{r_\delta}(\delta\tau) \in M_k^!(\Gamma_1(N))$.
\end{proof}
 
 The next theorem computes the sum of the orders of vanishing of an eta-quotient in $M_k^!(\Gamma_1(N))$ in terms of the divisor function
 \[
 \sigma_a(n) := \sum_{d\mid n} d^a.
 \]
 
 \begin{theorem}\label{sfvansum}
Let $N=p_1\cdots p_t$, a product of distinct primes with $p_i \geq 5$, and let $f(\tau) = \prod_{\delta \mid N} \eta^{r_\delta}(\delta \tau) \in M_k^!(\Gamma_1(N))$.  Then
\[
\sum_{d \mid N} v_{1/d} = \frac{k\sigma_1(N)}{12}.
\]
\end{theorem}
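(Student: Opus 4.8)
The plan is to compute the sum of orders of vanishing directly from the formula in Theorem \ref{1.65}, summing over the squarefree cusp representatives given in \eqref{cuspreps}, and then interchange the order of summation to isolate a clean factor depending only on the structure of $N$. Since $N = p_1\cdots p_t$ is squarefree, the cusps of $\Gamma_0(N)$ are exactly $\{1/d : d \mid N\}$, and because $\gcd(d, N/d) = 1$ for every divisor $d$ of a squarefree $N$, the denominator $\gcd(d,N/d)$ in Theorem \ref{1.65} is always $1$. This simplifies the order of vanishing at the cusp $1/d$ to
\[
v_{1/d} = \frac{N}{24} \sum_{\delta \mid N} \frac{\gcd(d,\delta)^2 \, r_\delta}{d\,\delta}.
\]

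Next I would sum over all divisors $d \mid N$ and swap the summation order so that the coefficient of each $r_\delta$ is isolated:
\[
\sum_{d \mid N} v_{1/d} = \frac{N}{24} \sum_{\delta \mid N} \frac{r_\delta}{\delta} \left( \sum_{d \mid N} \frac{\gcd(d,\delta)^2}{d} \right).
\]
The key computation is the inner sum $S(\delta) := \sum_{d \mid N} \gcd(d,\delta)^2 / d$, which I expect to evaluate to something proportional to $\delta$, so that the $r_\delta/\delta$ and the surviving $\delta$ cancel and the whole expression collapses to a multiple of $\sum_{\delta \mid N} r_\delta = 2k$. Because $N$ is squarefree, both $\gcd(d,\delta)^2$ and $1/d$ are multiplicative in $d$, so $S(\delta)$ factors over the primes $p_i \mid N$; for each prime $p$, the local factor is $1 + \gcd(p, p^{v_p(\delta)})^2/p$, which equals $1 + 1/p$ when $p \nmid \delta$ and $1 + p$ when $p \mid \delta$. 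I would then argue that the product over primes dividing $\delta$ contributes $\prod_{p \mid \delta}(1+p)/(1+1/p) = \delta \prod_{p\mid \delta}(1+1/p)^{-1} \cdot \prod_{p\mid\delta}(1+p)\cdots$, organizing the factors so that $S(\delta) = \delta \cdot \prod_{p \mid N}(1 + 1/p) = \delta\,\sigma_1(N)/N$, using that $\sigma_1(N) = \prod_{p\mid N}(1+p)$ for squarefree $N$.

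The main obstacle is the bookkeeping in that inner product: I need to verify carefully that the local factors recombine to exactly $\delta \cdot \sigma_1(N)/N$ rather than some other divisor-function expression, since it is easy to misplace a factor of $\delta$ or a factor of $1/p$ when splitting primes into those dividing $\delta$ and those not. Once $S(\delta) = \delta\,\sigma_1(N)/N$ is confirmed, substituting back gives
\[
\sum_{d \mid N} v_{1/d} = \frac{N}{24} \cdot \frac{\sigma_1(N)}{N} \sum_{\delta \mid N} r_\delta = \frac{\sigma_1(N)}{24} \cdot 2k = \frac{k\,\sigma_1(N)}{12},
\]
using $\sum_{\delta\mid N} r_\delta = 2k$ from the definition of the weight. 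I would present the prime-by-prime factorization as a short lemma-style computation to keep the multiplicativity argument transparent, and note that nowhere do I need holomorphicity of $f$ — the identity is purely formal once $f \in M_k^!(\Gamma_1(N))$ guarantees the orders of vanishing are given by Theorem \ref{1.65} via Corollary \ref{etathm}.
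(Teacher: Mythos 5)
Your proposal is correct, and while it follows the same overall skeleton as the paper's proof, it settles the crux by a genuinely different argument. Both you and the paper begin identically: use squarefreeness to drop the factor $\gcd(d,N/d)=1$ from Theorem \ref{1.65}, sum over the cusp representatives $\{1/d : d\mid N\}$, swap the order of summation, and reduce everything to showing that the inner sum $\sum_{d\mid N} N\gcd(d,\delta)^2/(d\delta)$ equals $\sigma_1(N)$ for each fixed $\delta$, after which $\sum_{\delta\mid N} r_\delta = 2k$ finishes the computation. The paper evaluates that inner sum combinatorially: it shows that for each divisor $d'$ of $N$ there is a unique $d \mid N$ with $N\gcd(d,\delta)^2/(d\delta) = d'$ (explicitly $d = \gcd(d',\delta)\gcd(N/d',N/\delta)$), so the terms of the inner sum are a permutation of the divisors of $N$; the uniqueness half requires a case analysis on prime factorizations. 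You instead exploit multiplicativity of $d \mapsto \gcd(d,\delta)^2/d$ over the squarefree $N$, factor into local Euler factors $1 + \gcd(p,\delta)^2/p$, and recombine via $1+p = p(1+\tfrac1p)$:
\[
S(\delta) \;=\; \prod_{p\mid\delta}(1+p)\prod_{\substack{p\mid N \\ p\nmid\delta}}\Bigl(1+\tfrac{1}{p}\Bigr) \;=\; \delta\prod_{p\mid N}\frac{p+1}{p} \;=\; \frac{\delta\,\sigma_1(N)}{N},
\]
equivalently $\tfrac{N}{\delta}S(\delta) = \sigma_1(N)$, which is precisely the paper's identity \eqref{willuse}; this computation checks out. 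Your route is shorter and entirely mechanical where the paper must verify both existence and uniqueness of the bijection; the paper's argument buys the slightly stronger structural fact that the values $N\gcd(d,\delta)^2/(d\delta)$, as $d$ ranges over divisors of $N$, are exactly the divisors of $N$ and not merely a collection summing to $\sigma_1(N)$ --- though nothing downstream uses more than the sum, so your version supports the later applications as well (e.g., Lemma \ref{sfexistence}, which by the symmetry of the summand in $d$ and $\delta$ follows equally from your identity). Your closing observation that holomorphicity is never needed matches the paper, which states the result for $M_k^!(\Gamma_1(N))$ and invokes only Corollary \ref{etathm} to justify applying Theorem \ref{1.65}.
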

 
\begin{proof}
Since $N$ is squarefree, Theorem \ref{1.65} gives that
\begin{equation} \label{vanishsum}
\sum_{d \mid N} v_{1/d} = \sum_{d \mid N} \frac{N}{24} \sum_{\delta \mid N} \frac{\gcd(d,\delta)^2 r_\delta}{d\delta} = \frac{1}{24} \sum_{\delta \mid N} r_\delta \sum_{d \mid N} \frac{N\gcd(d,\delta)^2}{d\delta}. 
\end{equation}
Since $\sum_{\delta \mid N} r_\delta = 2k$, it suffices to show that the inner sum in \eqref{vanishsum} is $\sigma_1(N)$.  We thus fix $\delta \mid N$, and aim to show that for each divisor $d'$ of $N$, there is a unique divisor  $d \mid N$ such that
\begin{equation} \label{goaleqn}
\frac{N\gcd(d,\delta)^2}{d\delta} = d'.
\end{equation}

We first show existence.  Given $d' \mid N$, set $d = \gcd(d',\delta)\gcd(N/d',N/\delta)$, which clearly divides $N$.  Since $\delta$, $d'$, and $d$ are all squarefree, we observe that $\gcd(d,\delta) = \gcd(d',\delta)$, since any prime divisor of $d$ and $\delta$ must divide $\gcd(d',\delta)$.

Our goal is to show \eqref{goaleqn}, which we rewrite as
\begin{equation} \label{goaleqn2}
\frac{N}{\gcd(N/d',N/\delta)} = \frac{\delta d'}{\gcd(d',\delta)}.
\end{equation}
The left hand side of \eqref{goaleqn2} is simply the product of the prime divisors of $N$ which are also prime divisors of $\delta$ or $d'$.  But this is also the right hand side of \eqref{goaleqn2} so we are done, and have shown existence. 

To determine uniqueness, we suppose $d_1, d_2 \mid N$ such that $\frac{N\gcd(d_1,\delta)^2}{d_1\delta}=\frac{N\gcd(d_2,\delta)^2}{d_2\delta}$, namely that $d_2\gcd(d_1,\delta)^2=d_1\gcd(d_2,\delta)^2$.  Since $N$ is squarefree, comparing the prime factorizations of $d_1$, $d_2$, $\delta$, and considering cases yields that $d_1=d_2$ as desired. 

We are now finished since we have shown that 
\begin{equation}\label{willuse}
\sum_{d \mid N} \frac{N\gcd(d,\delta)^2}{d\delta} = \sigma_1(N).
\end{equation}
\end{proof}
 
\begin{remark}
When $N$ is squarefree, $\sigma_1(N) = \prod_{p \mid N}(p+1)$.  Thus when $N=p_1\cdots p_t$, a product of distinct primes with $p_i \geq 5$, and $f(\tau) = \prod_{\delta \mid N} \eta^{r_\delta}(\delta \tau) \in M_k^!(\Gamma_1(N))$, we have by Theorem \ref{sfvansum} that
\[
\sum_{d \mid N}v_{1/d} = \frac{k\sigma_1(N)}{12} = \frac{k\prod_{p \mid N}(p+1)}{12}. 
\]
From this we can observe that $k\sigma_1(N)/12$ must always be an integer whenever $k$ is an even integer such that $h\mid k$, for $h = \frac{1}{2} \gcd(p_1-1, \ldots, p_t-1, 24)$. This is because if $3$ doesn't divide $p+1$ for any $p\mid N$, then $3$ must divide $p-1$ for all $p\mid N$, and so $3$ divides $h$ and thus $k$. 
\end{remark}
 
 We next generalize Lemma \ref{evenexistence} to squarefree levels.
 
 \begin{lemma}\label{sfexistence}
Let $N=p_1\cdots p_t$, a product of distinct primes with $p_i \geq 5$, and define $h = \frac{1}{2} \gcd(p_1-1, \ldots, p_t-1, 24)$.  Suppose $k$ is a positive integer such that $h\mid k$, $2^{t-1} \mid k$, and $2^t \mid \left(k\sigma_1(N)/12\right)$.  Then there exists $f(\tau) = \prod_{\delta \mid N} \eta^{r_\delta}(\delta\tau) \in M_k(\Gamma_1(N))$.
 \end{lemma}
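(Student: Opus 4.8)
The plan is to mimic the proof of Lemma \ref{evenexistence}, which handled the prime case $t=1$ via the symmetric eta-quotient $\eta^k(\tau)\eta^k(p\tau)$, by producing the analogous fully symmetric eta-quotient in which every divisor receives the same exponent. Concretely, set $c := k/2^{t-1}$ — a positive integer precisely because of the hypothesis $2^{t-1}\mid k$ — and consider
\[
f(\tau) = \prod_{\delta\mid N}\eta^{c}(\delta\tau),
\]
so that $r_\delta = c$ for all $\delta \mid N$. Since $N$ has exactly $2^t$ divisors, the weight is $\tfrac12\sum_{\delta\mid N}r_\delta = \tfrac12\cdot 2^t c = 2^{t-1}c = k$, as required.

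First I would check that $f \in M_k^!(\Gamma_1(N))$ by verifying the hypotheses of Theorem \ref{1.64}. Because every $p_i \geq 5$ we have $\gcd(N,6)=1$, so by Remark \ref{1.64reduct} it suffices to confirm $\sum_{\delta\mid N}\delta r_\delta \equiv 0 \pmod{24}$. Here $\sum_{\delta\mid N}\delta r_\delta = c\,\sigma_1(N) = 24\cdot\frac{k\sigma_1(N)}{12\cdot 2^t}$, and the fraction $\frac{k\sigma_1(N)}{12\cdot 2^t} = \frac{k\sigma_1(N)/12}{2^t}$ is an integer: the hypothesis $h\mid k$ makes $k\sigma_1(N)/12$ integral (as recorded in the remark following Theorem \ref{sfvansum}), and the hypothesis $2^t \mid k\sigma_1(N)/12$ then removes the remaining power of two. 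Hence the congruence holds and $f\in M_k^!(\Gamma_0(N),\chi)\subseteq M_k^!(\Gamma_1(N))$.

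It then remains to upgrade from $M_k^!$ to $M_k$ by showing $f$ is holomorphic at every cusp; by Corollary \ref{etathm} and the surrounding discussion it suffices to examine the cusps $1/d$ with $d\mid N$. Applying Theorem \ref{1.65} with $r_\delta = c$ gives
\[
v_{1/d} = \frac{c}{24}\sum_{\delta\mid N}\frac{N\gcd(d,\delta)^2}{d\delta}.
\]
The key point is that the summand is symmetric in $d$ and $\delta$, so the identity \eqref{willuse} established in the proof of Theorem \ref{sfvansum} (there summed over $d$ for fixed $\delta$) applies equally with the roles reversed, yielding $\sum_{\delta\mid N}\frac{N\gcd(d,\delta)^2}{d\delta} = \sigma_1(N)$ for every fixed $d$. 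Therefore $v_{1/d} = \frac{c\sigma_1(N)}{24} = \frac{k\sigma_1(N)}{12\cdot 2^t}$ is the same positive integer at all $2^t$ cusps. Being holomorphic (indeed vanishing) at every cusp, $f \in M_k(\Gamma_1(N))$, which completes the construction.

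Finally, the main obstacle — really more of an insight than a difficulty — is recognizing that the fully symmetric choice of exponents forces all cusp orders $v_{1/d}$ to coincide, and that the three divisibility hypotheses are calibrated exactly to make $c$ integral (via $2^{t-1}\mid k$), to land inside $M_k^!$ (via $h\mid k$ through the integrality of $k\sigma_1(N)/12$), and to make the common cusp order a positive integer (via $2^t\mid k\sigma_1(N)/12$). Once the symmetry of the vanishing formula in Theorem \ref{1.65} is in hand, every remaining step is a short verification; no genuinely hard estimate or case analysis is needed, in contrast to the converse direction of Theorem \ref{correctThm}.
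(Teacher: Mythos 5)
Your proposal is correct and follows essentially the same route as the paper: the same symmetric eta-quotient $\prod_{\delta\mid N}\eta^{k/2^{t-1}}(\delta\tau)$, the same verification via Remark \ref{1.64reduct} and Corollary \ref{etathm}, and the same use of \eqref{willuse} with Theorem \ref{1.65} to get the common cusp order $k\sigma_1(N)/(2^t\cdot 12)$. Your explicit note that \eqref{willuse} applies with the roles of $d$ and $\delta$ reversed (by symmetry of the summand) is a small point the paper leaves implicit, and is a welcome clarification.
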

 
 \begin{proof}
  Define $f(\tau)$ by
  \[f(\tau) = \prod_{\delta \mid N} \eta^{\frac{k}{2^{t-1}}}(\delta\tau).\]
  As $N$ is squarefree, there are $2^t$ divisors of $N$, and so $\sum_{\delta \mid N} \frac{k}{2^{t-1}} = 2k$.  Additionally, by our hypothesis $2^t \mid (k \sigma_1(N)/12)$, so we have
\[
\sum_{\delta \mid N} \delta \left(\frac{k}{2^{t-1}}\right) = \left(\frac{k}{2^{t-1}}\right) \sigma_1(N) = \frac{k \sigma_1(N) }{2^{t-1}} \equiv 0 \pmod{24}.
\]
Thus, $f(\tau)$ satisfies Theorem \ref{1.64}, and so by Corollary \ref{etathm}, $f(\tau) \in M_k^!(\Gamma_1(N))$.  Recalling \eqref{willuse}, we have by Theorem \ref{1.65} that
\[
v_{1/d} = \frac{k}{2^{t-1}} \cdot \frac{\sigma_1(N)}{24} = \frac{k\sigma_1(N)}{2^t\cdot 12},
\]
which is a positive integer by our hypotheses. Thus $f(\tau) \in M_k(\Gamma_1(N))$.
\end{proof}


We are now ready to prove Theorem \ref{mastersemi}.

\begin{proof}[Proof of Theorem \ref{mastersemi}]

First, we assume that 
$$
f(\tau) = \eta^{r_1}(\tau)\eta^{r_p}(p\tau)\eta^{r_q}(q\tau)\eta^{r_N}(N\tau)\in M_k(\Gamma_1(pq)),
$$ 
where $N=pq$ is a product of distinct primes $p,q\geq 5$, and $k$ is an even integer.  By Theorem \ref{sfhlemma}, we have that $h \mid k$.  To complete this forward direction of the proof, we show that there are no eta-quotients in $M_2(\Gamma_1(pq))$ when $(p,q) \equiv (1,5), (5,1), (5,5) \pmod{24}$ and $p,q > 5$.  Without loss of generality, we may assume $p$ and $q$ are chosen so that $p$ has a lesser or equal residue modulo $24$.

Recalling Remark \ref{RWrmk}, we use Theorem \ref{expbound}.  Fix $(p,q) \equiv (1,5), (5,5) \pmod{24}$ with $p,q > 5$, and suppose $f(\tau) = \eta^{r_1}(\tau)\eta^{r_p}(p\tau)\eta^{r_q}(q\tau)\eta^{r_N}(N\tau) \in M_2(\Gamma_1(pq))$.  Then by Theorem \ref{expbound},
\begin{equation*}
 |r_1|+|r_p| +  |r_q|+|r_N| \leq 4\left(\frac{p+1}{p-1}\right)\left(\frac{q+1}{q-1}\right).
\end{equation*}
This upper bound decreases as $p$ and $q$ increases, so the bound will be largest when $p=29$ and $q=53$. This gives a bound less than 5, so since the $r_\delta$ are integers,
\begin{equation}\label{spexpineq}
 |r_1|+|r_p| +  |r_q|+|r_N|  \leq 4.
\end{equation}

Moreover, by Theorem \ref{1.65},
\begin{align*}
  24v_1 &= Nr_1+qr_p+pr_q+r_N \\
  24v_{1/p} &= qr_1 + Nr_p + r_q + pr_N \\
  24v_{1/q} &= pr_1 + r_p + Nr_q + qr_N \\
  24v_{1/N} &= r_1 + pr_p + qr_q + Nr_N.
\end{align*}
Since $f(\tau) \in M_k(\Gamma_1(N))$, we have that $v_1, v_{1/p}, v_{1/q}, v_{1/N} \geq 0$.  It follows that $r_1, r_p, r_q, r_N \geq 0$.  Namely, if $r_1<0$, then using \eqref{spexpineq}, and assuming without loss of generality that $p<q$, we see that
\[
24v_1 < -pq |r_1| + q|r_p|+q|r_q|+q|r_N| \leq -pq |r_1| + 4q < -pq |r_1| + pq < 0,
\]
which contradicts that $v_1\geq 0$.  Similarly, if $r_p$, $r_q$, or $r_N$ is negative we get contradictions for the nonnegativity of  $v_{1/p}$, $v_{1/q}$, and $v_{1/N}$ respectively. 
  
By Corollary \ref{etathm}, we also know that 
\begin{equation}\label{firsteqn}
r_1+r_p+5r_q+5r_N \equiv 0 \pmod{24},
\end{equation}
if $(p,q) \equiv (1,5) \pmod{24},$ and
\begin{equation}\label{secondeqn}
r_1+5r_p+5r_q+r_N \equiv 0 \pmod{24}
\end{equation}
if $(p,q) \equiv (5,5) \pmod{24}$.  
  
However, neither \eqref{firsteqn} nor \eqref{secondeqn} have nonnegative integer solutions which satisfy (\ref{spexpineq}), which is a contradiction.  Thus we have shown that there are no eta-quotients in $M_2(\Gamma_1(pq))$ for $(p,q) \equiv (1,5), (5,1), (5,5) \pmod{24}$ and $p,q > 5$.


We now prove the converse by construction.  Namely, we need to show that if $h\mid k$ and it is not the case that $(p, q) \!\!\! \mod{24} \in \{(1, 5),(5,1),(5, 5)\}$, $p, q \ne 5$, and $k = 2$, then there does exist an eta-quotient in $M_k(\Gamma_1(pq))$.

We first note that setting $t=2$ in Lemma \ref{sfexistence} guarantees the existence of an eta-quotient in $M_k(\Gamma_1(pq))$ for distinct primes $p, q \geq 5$, when $k$ is an even integer divisible by $h $ such that $k(p+1)(q+1)/12$ is divisible by $4$.  Since $4$ divides $k(p+1)(q+1)/12$ whenever $4$ divides any one of $p+1$, $q+1$, or $k$, it suffices to consider only the cases of $p, q,$ and $k$ when $p+1 \equiv q+1 \equiv k \equiv 2 \pmod{4}$.  Consider the possible residues for $(p,q)$ modulo $24$, ordering so that the residue of $p$ is no larger than that of $q$.  We may immediately disregard the cases $(1,1)$, $(1,17),$ and $(17,17),$ since in each $4 \mid h$, and thus since $h \mid k$ they are covered by Lemma \ref{sfexistence}.  This leaves the cases $(1,5)$, $(1,13)$, $(5,5)$, $(5,13)$, $(5,17)$, $(13,13)$, and $(13,17)$.  It suffices to show there exists an eta-quotient in $M_h(\Gamma_1(N))$, since if $f(\tau)\in M_h(\Gamma_1(p))$, then $f^{k/h}(\tau) \in M_k(\Gamma_1(p))$.  The following table gives such eta-quotients for the cases $(1,13)$, $(5,13)$, $(5,17)$, $(13,13)$, and $(13,17)$.
 \begin{center}
 \begin{tabular}{|c|c|c|}
  \hline
  Case & $h$ & eta-quotient \\
  \hline
  $(1,13)$ & $6$ & $\eta^{11}(\tau)\eta(q\tau)$ \Tstrut\Bstrut\\
  \hline
  $(5,13)$ & $2$ & $\eta(p\tau)\eta^2(q\tau)\eta(N\tau)$ \Tstrut\Bstrut\\
  \hline
  $(5,17)$ & $2$ & $\eta(p\tau)\eta(q\tau)\eta^2(N\tau)$ \Tstrut\Bstrut\\
  \hline
  $(13,13)$ & $6$ & $\eta(q\tau)\eta^{11}(N\tau)$ \Tstrut\Bstrut\\
  \hline
  $(13,17)$ & $2$ & $\eta^2(p\tau)\eta(q\tau)\eta(N\tau)$ \Tstrut\Bstrut\\
  \hline
  \end{tabular}
 \end{center}
 This leaves the cases $(1,5)$ and $(5,5)$, both of which have $h=2$.  If either $p$ or $q$ is 5, then $M_2(\Gamma(5)) \subset M_2(\Gamma(N))$, and so by Theorem \ref{correctThm} there will exist an eta-quotient in $M_2(\Gamma(N))$.  We thus assume that neither $p$ nor $q$ is equal to $5$. As every even integer $k\geq4$ can be written as a linear combination of $4$ and $6$, it suffices to show the existence of an eta-quotient in both $M_4(\Gamma_1(N))$ and $M_6(\Gamma_1(N))$.  By Lemma \ref{sfexistence}, we have the existence of an eta-quotient in $M_4(\Gamma_1(N))$.  Moreover, one can check using Theorem \ref{1.64} that $\eta^3(\tau)\eta^9(N\tau)\in M_6(\Gamma_1(N))$ when $(p,q) \equiv (1,5) \pmod{24}$, and  $\eta^3(q\tau)\eta^9(N\tau)\in M_6(\Gamma_1(N))$ when $(p,q) \equiv (5,5) \pmod{24}$.

\end{proof}

\section{Elliptic Curves and Eta-Quotients}\label{ellcurves}

In this section, we prove Theorems \ref{N=35} and \ref{N=55}, and conclude by describing the method we used to find these examples. 

\begin{proof}[Proof of Theorem \ref{N=35}]
First using dimension formulas  (Theorems 3.5.1 and 3.1.1 in \cite{DS} for example), we calculate that $\dim_\C S_2(\G_0(35)) =3$.  By Theorems \ref{1.64} and \ref{1.65}, we see that the following three eta-quotients are members of $S_2(\G_0(35))$,
\begin{align*}
g_1(\tau) &:= \eta(\tau)\eta(5\tau)\eta(7\tau)\eta(35\tau), \\
g_2(\tau) &:= \eta(\tau)^2\eta(35\tau)^2, \\
g_3(\tau) &:= \eta(5\tau)^2\eta(7\tau)^2.
\end{align*}
The $q$-expansions of $g_1,g_2,g_3$ begin with
\begin{align*}
g_1(\tau) &= q^2 - q^3 - q^4 + q^8 + q^9 +  O(q^{10})  \\
g_2(\tau) &= q^3 - 2q^4 - q^5 + 2q^6 + q^7 + 2q^8 - 2q^9 + O(q^{10})   \\
g_3(\tau) &= q - 2q^6 - 2q^8  + O(q^{10}).  
\end{align*}
Since each $q$-expansion starts with a different power of $q$, we can quickly determine that $g_1,g_2,g_3$ are linearly independent.  Thus, they form a basis of  $S_2(\G_0(35))$, and by Theorem \ref{modularityTheorem}, any elliptic curve of conductor $35$ must be a linear combination of $g_1$, $g_2$, and $g_3$.  However, one can see for example from the L-functions and Modular Forms Database (LMFDB) \cite{lmfdb} that there is only one isogeny class of elliptic curves of conductor $35$ \cite[\href{http://www.lmfdb.org/EllipticCurve/Q/35/a/}{Elliptic Curve Isogeny Class 35.a}]{lmfdb}, and thus only one attached modular form, $f(\tau)\in S_2(\G_0(35))$ \cite[\href{http://www.lmfdb.org/ModularForm/GL2/Q/holomorphic/35/2/1/a/}{Modular Form 35.2.1.a}]{lmfdb}.  The $q$-expansion of $f$ begins with
\[
f(\tau) = q + q^3 - 2q^4 - q^5 + q^7 - 2q^9  + O(q^{10})
\]
and since the bound in Theorem \ref{sturmBound} is $8$ in this case, we see by Remark \ref{SturmRmk} that $f(\tau) =g_2(\tau) + g_3(\tau)$, as desired. 
\end{proof}

We now turn to the proof of Theorem \ref{N=55}, which requires more finesse.  

\begin{proof}[Proof of Theorem \ref{N=55}]
As with conductor $35$, there is only one isogeny class of elliptic curves of conductor $55$ \cite[\href{http://www.lmfdb.org/EllipticCurve/Q/55/a/}{Elliptic Curve Isogeny Class 55.a}]{lmfdb}, and thus only one attached modular form, $f(\tau)\in S_2(\G_0(55))$ \cite[\href{http://www.lmfdb.org/ModularForm/GL2/Q/holomorphic/55/2/1/a/}{Modular Form 55.2.1.a}]{lmfdb}.  The $q$-expansion of $f$ begins with
\[
f(\tau) = q + q^2 - q^4 + q^5 - 3q^8 - 3q^9 + q^{10} - q^{11} + 2q^{13} + O(q^{15}).
\]
The bound in Theorem \ref{sturmBound} is $12$ in this case, so we see by Remark \ref{SturmRmk}  that modular forms in $S_2(\G_0(55))$ are determined by their Fourier coefficients up to $q^{13}$.  

Using dimension formulas, we calculate that $\dim_\C S_2(\G_0(55)) =5$.  However, there are only three linearly independent eta-quotients in $S_2(\G_0(55))$ given by
\begin{align*}
g_1(\tau) &:= \eta(\tau)\eta(5\tau)\eta(11\tau)\eta(55\tau), \\
g_2(\tau) &:= \eta(\tau)^2\eta(11\tau)^2, \\
g_3(\tau) &:= \eta(5\tau)^2\eta(55\tau)^2,
\end{align*}
with $q$-expansions beginning with
 \begin{align*}
g_1(\tau) &= q^3 - q^4 - q^5 + q^9 + 2q^{10} - 2q^{13} + O(q^{15}), \\
g_2(\tau) &= q - 2q^2 - q^3 + 2q^4 + q^5 + 2q^6 - 2q^7 - 2q^9 - 2q^{10} + q^{11} - 2q^{12} + 4q^{13} + 4q^{14} + O(q^{15}), \\
g_3(\tau) &= q^5 - 2q^{10} + O(q^{15}).
\end{align*}

We thus use a method originating in work of Rouse and Webb \cite{RW}, which is to multiply $f(\tau)$ by an eta-quotient $a(\tau)$ in order to push the product $f(\tau)a(\tau)$ into a higher weight space that is generated by eta-quotients.  Then $f$ can be written as a linear combination of weakly holomorphic eta-quotients in $M_2^!(\G_0(55))$. \rmhs   

Consider the eta-quotient 
\[
a(\tau) = \eta(\tau)^3\eta(5\tau)^3\eta(11\tau)^3\eta(55\tau)^3.
\]
We see that $a(\tau)\in S_6(\G_0(55))$ by Theorems \ref{1.64} and \ref{1.65} , and so $a(\tau)f(\tau) \in S_8(\G_0(55))$.  Since the bound from Theorem \ref{sturmBound} is $48$ in this case, modular forms in $S_8(\G_0(55))$ are determined by their Fourier coefficients up to $q^{49}$.  We see that the $q$-expansion for $a(\tau)$ begins with
\begin{align*}
a(\tau) & = q^9 - 3 q^{10} + 5q^{12}  - 3q^{14} + 2q^{15} - 15 q^{17} + 9q^{19} + 18q^{20} + 9q^{21} - 15 q^{23}  - 33 q^{24} - 6 q^{25} \\ & - 6 q^{26} + 25 q^{27} + 45 q^{28} + 33 q^{29} - 49 q^{30} - 63 q^{31} + 45 q^{34} + 60 q^{35} + 45 q^{36} - 15 q^{37} - 75 q^{38} \\ & - 62 q^{39} - 78 q^{40}  + 66 q^{41} + 15 q^{42} - 15 q^{43} + 21 q^{45} + 117 q^{46} - 15 q^{47} + 55 q^{48} - 63 q^{49} + O(q^{50}),
\end{align*}
whereas the expansion for $f(\tau)$ begins with
\begin{align*}
f(\tau) & = q + q^2 - q^4 + q^5 - 3q^8 - 3q^9 + q^{10} - q^{11} + 2q^{13} - q^{16} + 6q^{17} - 3q^{18} - 4q^{19} - q^{20} \\ & - q^{22} + 4q^{23} + q^{25} + 2q^{26} + 6q^{29} - 8q^{31} + 5q^{32} + 6q^{34} + 3q^{36} - 2q^{37} - 4q^{38} - 3q^{40} \\ & + 2q^{41} + 4q^{43} + q^{44} - 3 q^{45} + 4 q^{46} - 12 q^{47} - 7 q^{49} +  O(q^{50}).
\end{align*}
Thus we see that the $q$-expansion for the product $f(\tau)a(\tau)$ starts with
\begin{align*}
f(\tau)a(\tau)  & = q^{10} - 2q^{11} - 3q^{12} + 4q^{13} + 9q^{14} -6q^{15} -6q^{16} + 4q^{17} -6q^{18} -10q^{19} -8q^{20} \\ & + 30q^{21} + 28q^{22} -8q^{23} -33q^{24} + 20q^{25} + 22q^{26} -66q^{27} -25q^{28} + 12q^{29} + 21q^{30}  \\ &  -52q^{31} + 44q^{32} + 96q^{33} + 19q^{34} + 82q^{35} -105q^{36} -86q^{37} + 29q^{38} + 108q^{39} -148q^{40}  \\ &  -106q^{41} + 159q^{42} -260q^{43} -136q^{44}  + 36q^{45} + 261q^{46} -22q^{47} + 309q^{48} + 430q^{49} + O(q^{50})
\end{align*}

Moreover, we calculate that $\dim_\C S_8(\G_0(55)) = 40$, and compute the following basis of $S_8(\G_0(55))$ consisting only of eta-quotients $\{g_1, \ldots, g_{40} \}$,  which are given in Table \ref{etaBasis}.

\begin{table}
\caption{Eta-quotient basis of $S_8(\G_0(55))$}
\label{etaBasis}
\bgroup
\def\arraystretch{1.1}
\begin{tabular}{|c|c|}
\hline
$i$ & $g_i(\tau)$ \\ \hline 
1 & $  \eta(\tau)^4\eta(5\tau)^4\eta(11\tau)^4\eta(55\tau)^4 $ \\ \hline
2 & $  \eta(\tau)^5\eta(5\tau)^3\eta(11\tau)^5\eta(55\tau)^3 $ \\ \hline
3 & $  \eta(\tau)^3\eta(5\tau)^5\eta(11\tau)^3\eta(55\tau)^5 $ \\ \hline
4 & $  \eta(\tau)^2\eta(5\tau)^6\eta(11\tau)^2\eta(55\tau)^6 $ \\ \hline
5 & $  \eta(\tau)^6\eta(5\tau)^2\eta(11\tau)^6\eta(55\tau)^2 $ \\ \hline
6 & $  \eta(\tau)^7\eta(5\tau)\eta(11\tau)\eta(55\tau)^7 $ \\ \hline
7 & $  \eta(\tau)^6\eta(5\tau)^6\eta(11\tau)^2\eta(55\tau)^2 $ \\ \hline
8 & $  \eta(\tau)^7\eta(5\tau)\eta(11\tau)^7\eta(55\tau) $ \\ \hline
9 & $  \eta(\tau)\eta(5\tau)^7\eta(11\tau)\eta(55\tau)^7 $ \\ \hline
10 & $  \eta(\tau)^8\eta(11\tau)^2\eta(55\tau)^6 $ \\ \hline
11 & $  \eta(\tau)^7\eta(5\tau)^5\eta(11\tau)^3\eta(55\tau) $ \\ \hline
12 & $  \eta(\tau)\eta(5\tau)^3\eta(11\tau)^5\eta(55\tau)^7 $ \\ \hline
13 & $  \eta(\tau)^5\eta(5\tau)^7\eta(11\tau)\eta(55\tau)^3 $ \\ \hline
14 & $  \eta(5\tau)^8\eta(55\tau)^8 $ \\ \hline
15 & $  \eta(\tau)^8\eta(11\tau)^8 $ \\ \hline
16 & $  \eta(\tau)^9\eta(5\tau)^{-1}\eta(11\tau)^3\eta(55\tau)^5 $ \\ \hline
17 & $  \eta(\tau)^5\eta(5\tau)^3\eta(11\tau)^{-1}\eta(55\tau)^9 $ \\ \hline
18 & $  \eta(\tau)^{-1}\eta(5\tau)^9\eta(11\tau)^5\eta(55\tau)^3 $ \\ \hline
19 & $  \eta(\tau)^3\eta(5\tau)^5\eta(11\tau)^9\eta(55\tau)^{-1} $ \\ \hline
20 & $  \eta(\tau)^3\eta(5\tau)^9\eta(11\tau)^5\eta(55\tau)^{-1} $ \\ \hline
21 & $  \eta(\tau)^5\eta(5\tau)^{-1}\eta(11\tau)^3\eta(55\tau)^9 $ \\ \hline
22 & $  \eta(\tau)^9\eta(5\tau)^3\eta(11\tau)^5\eta(55\tau)^{-1} $ \\ \hline
23 & $  \eta(\tau)^{-1}\eta(5\tau)^5\eta(11\tau)^3\eta(55\tau)^9 $ \\ \hline
24 & $  \eta(\tau)^4\eta(11\tau)^2\eta(55\tau)^{10} $ \\ \hline
25 & $  \eta(\tau)^3\eta(5\tau)\eta(11\tau)\eta(55\tau)^{11} $ \\ \hline
26 & $  \eta(\tau)^2\eta(5\tau)^2\eta(55\tau)^{12} $ \\ \hline
27 & $  \eta(\tau)\eta(5\tau)^3\eta(11\tau)^{-1}\eta(55\tau)^{13} $ \\ \hline
28 & $  \eta(\tau)^6\eta(5\tau)^{-2}\eta(11\tau)^{-2}\eta(55\tau)^{14} $ \\ \hline
29 & $  \eta(5\tau)^4\eta(11\tau)^{-2}\eta(55\tau)^{14} $ \\ \hline
30 & $  \eta(11\tau)^2\eta(55\tau)^{14} $ \\ \hline
31 & $  \eta(\tau)\eta(5\tau)^{-1}\eta(11\tau)^{-3}\eta(55\tau)^{19} $ \\ \hline
32 & $  \eta(\tau)\eta(5\tau)^7\eta(11\tau)^7\eta(55\tau) $ \\ \hline
33 & $  \eta(\tau)^2\eta(5\tau)^2\eta(11\tau)^6\eta(55\tau)^6 $ \\ \hline
34 & $  \eta(\tau)^3\eta(5\tau)\eta(11\tau)^7\eta(55\tau)^5 $ \\ \hline
35 & $  \eta(\tau)^2\eta(5\tau)^6\eta(11\tau)^8 $ \\ \hline
36 & $  \eta(\tau)^6\eta(5\tau)^2\eta(55\tau)^8 $ \\ \hline
37 & $  \eta(5\tau)^8\eta(11\tau)^6\eta(55\tau)^2 $ \\ \hline
38 & $  \eta(\tau)^4\eta(11\tau)^8\eta(55\tau)^4 $ \\ \hline
39 & $  \eta(\tau)^8\eta(5\tau)^4\eta(11\tau)^4  $ \\ \hline
40 & $  \eta(5\tau)^4\eta(11\tau)^4\eta(55\tau)^8  $ \\ \hline
\end{tabular}
\egroup
\end{table}

From their $q$-expansions, we calculate that 
\[
f(\tau)a(\tau) = \sum_{i=1}^{40} c_i g_i(\tau),
\]
where the coefficients $c_i$ are given in Table \ref{coeffTab}.

\begin{table}
\caption{Coefficients $c_i$}
\label{coeffTab}
\bgroup
\def\arraystretch{1.1}
\begin{tabular}{|c|c|}
\hline
$i$ & $c_i$ \\ \hline
1&    -6008649555929309389497/819506238451459924562 \\ \hline
 2&    -502520890503551696366/409753119225729962281 \\ \hline
3 &  -18079466846617647763574/1229259357677189886843 \\ \hline
4 &  -9707713817545985330315/1639012476902919849124 \\ \hline
  5 &   -256094683592582994017/819506238451459924562 \\ \hline
  6 &  -801755327323567495694/107829768217297358495 \\ \hline
 7 &-5830988018825221370539/12292593576771898868430 \\ \hline
8 &   -779344877836568799883/2458518715354379773686 \\ \hline
9 &   8222210796731963837135/1229259357677189886843 \\ \hline
10 &-11036620216580334273019/24585187153543797736860 \\ \hline
11 &  -1229041712172689367604/6146296788385949434215 \\ \hline
12 &-111561550514099684140912/2048765596128649811405 \\ \hline
13 &    107114413382088962036/2048765596128649811405 \\ \hline
14 &    -341358820591409973660/409753119225729962281 \\ \hline
15 &   -152612595137164539575/2458518715354379773686 \\ \hline
 16 &                                              0 \\ \hline
 17 &  -7213279306787331549849/819506238451459924562 \\ \hline
 18 &     32952053389588166394/409753119225729962281 \\ \hline
 19  &   30522519027432907915/2458518715354379773686 \\ \hline
   20 &                                            0 \\ \hline
  21 & -1552252178781785948225/819506238451459924562 \\ \hline
  22 &                                             0 \\ \hline
 23  &     15742588981996697524/11074408627722431413 \\ \hline
24 &-116284353318788030011856/1229259357677189886843 \\ \hline
25 & -532560894104521109482105/1639012476902919849124 \\ \hline
26 &-1514700568262560995033025/2458518715354379773686 \\ \hline
27 & -561586338302539429500115/819506238451459924562 \\ \hline
 28 &   -149810619258256219780/409753119225729962281 \\ \hline
29 & -170766572836167986762205/819506238451459924562 \\ \hline
30 &  -81633970222789942752773/409753119225729962281 \\ \hline
 31 & 151568343772098094548465/819506238451459924562 \\ \hline
 32 &   845017548690592920502/6146296788385949434215 \\ \hline
33 &-343045582489873801897249/12292593576771898868430 \\ \hline
34 &-750234154608987483240971/24585187153543797736860 \\ \hline
35  &  -297084239198956028146/6146296788385949434215 \\ \hline
36 & -30834650074592581387973/2048765596128649811405 \\ \hline
 37  &    -9228870808743396499/107829768217297358495 \\ \hline
 38 &  -4131906244671465777242/409753119225729962281 \\ \hline
39  &   -30522519027432907915/2458518715354379773686 \\ \hline
40  & -13146446749054646251719/819506238451459924562 \\ \hline
\end{tabular}
\egroup
\end{table}

Thus, we can write
\begin{equation}\label{f-linearcomb}
f(\tau)= \sum_{i=1}^{40} c_i \frac{g_i(\tau)}{a(\tau)}, 
\end{equation}
where the simplified eta-quotients $g_i(\tau)/a(\tau)$ are given in Table \ref{etaTab}.
\end{proof}

\begin{table}
\caption{Eta-quotients $g_i(\tau)/a(\tau)$}
\label{etaTab}
\bgroup
\def\arraystretch{1.1}
\begin{tabular}{|c|c|}
\hline
$i$ & $g_i(\tau)/a(\tau)$ \\ \hline 
1& $\eta(\tau)\eta(5\tau)\eta(11\tau)\eta(55\tau)$    \\ \hline
 2& $\eta(\tau)^2\eta(11\tau)^2$    \\ \hline
3 & $\eta(5\tau)^2\eta(55\tau)^2$  \\ \hline
4 & $\eta(\tau)^{-1}\eta(5\tau)^3\eta(11\tau)^{-1}\eta(55\tau)^3$  \\ \hline
  5 & $\eta(\tau)^3\eta(5\tau)^{-1}\eta(11\tau)^3\eta(55\tau)^{-1}$   \\ \hline
  6 & $\eta(\tau)^4\eta(5\tau)^{-2}\eta(11\tau)^{-2}\eta(55\tau)^4$  \\ \hline
 7 & $\eta(\tau)^3\eta(5\tau)^3\eta(11\tau)^{-1}\eta(55\tau)^{-1}$ \\ \hline
8 & $\eta(\tau)^4\eta(5\tau)^{-2}\eta(11\tau)^4\eta(55\tau)^{-2}$   \\ \hline
9 & $\eta(\tau)^{-2}\eta(5\tau)^4\eta(11\tau)^{-2}\eta(55\tau)^4$   \\ \hline
10 & $\eta(\tau)^5\eta(5\tau)^{-3}\eta(11\tau)^{-1}\eta(55\tau)^3$ \\ \hline
11 & $\eta(\tau)^4\eta(5\tau)^2\eta(55\tau)^{-2}$  \\ \hline
12 & $\eta(\tau)^{-2}\eta(11\tau)^2\eta(55\tau)^4$ \\ \hline
13 & $\eta(\tau)^2\eta(5\tau)^4\eta(11\tau)^{-2}$   \\ \hline
14 & $\eta(\tau)^{-3}\eta(5\tau)^5\eta(11\tau)^{-3}\eta(55\tau)^5$  \\ \hline
15 & $\eta(\tau)^5\eta(5\tau)^{-3}\eta(11\tau)^5\eta(55\tau)^{-3}$  \\ \hline
 16 & $\eta(\tau)^6\eta(5\tau)^{-4}\eta(55\tau)^2$    \\ \hline
 17 & $\eta(\tau)^2\eta(11\tau)^{-4}\eta(55\tau)^6$  \\ \hline
 18 & $\eta(\tau)^{-4}\eta(5\tau)^6\eta(11\tau)^2$    \\ \hline
 19  & $\eta(5\tau)^2\eta(11\tau)^6\eta(55\tau)^{-4}$   \\ \hline
   20 & $\eta(5\tau)^6\eta(11\tau)^2\eta(55\tau)^{-4}$   \\ \hline
  21 & $\eta(\tau)^2\eta(5\tau)^{-4}\eta(55\tau)^6$  \\ \hline
  22 & $\eta(\tau)^6\eta(11\tau)^2\eta(55\tau)^{-4}$   \\ \hline
 23  & $\eta(\tau)^{-4}\eta(5\tau)^2\eta(55\tau)^6$   \\ \hline
24 & $\eta(\tau)\eta(5\tau)^{-3}\eta(11\tau)^{-1}\eta(55\tau)^7$ \\ \hline
25 & $\eta(5\tau)^{-2}\eta(11\tau)^{-2}\eta(55\tau)^8$ \\ \hline
26 & $\eta(\tau)^{-1}\eta(5\tau)^{-1}\eta(11\tau)^{-3}\eta(55\tau)^9$ \\ \hline
27 & $\eta(\tau)^{-2}\eta(11\tau)^{-4}\eta(55\tau)^{10}$ \\ \hline
 28 & $\eta(\tau)^3\eta(5\tau)^{-5}\eta(11\tau)^{-5}\eta(55\tau)^{11}$   \\ \hline
29 & $\eta(\tau)^{-3}\eta(5\tau)\eta(11\tau)^{-5}\eta(55\tau)^{11}$ \\ \hline
30 & $\eta(\tau)^{-3}\eta(5\tau)^{-3}\eta(11\tau)^{-1}\eta(55\tau)^{11}$  \\ \hline
 31 & $\eta(\tau)^{-2}\eta(5\tau)^{-4}\eta(11\tau)^{-6}\eta(55\tau)^{16}$ \\ \hline
 32 & $\eta(\tau)^{-2}\eta(5\tau)^4\eta(11\tau)^4\eta(55\tau)^{-2}$  \\ \hline
33 & $\eta(\tau)^{-1}\eta(5\tau)^{-1}\eta(11\tau)^3\eta(55\tau)^3$ \\ \hline
34 & $\eta(5\tau)^{-2}\eta(11\tau)^4\eta(55\tau)^2$  \\ \hline
35  & $\eta(\tau)^{-1}\eta(5\tau)^3\eta(11\tau)^5\eta(55\tau)^{-3}$  \\ \hline
36 & $\eta(\tau)^3\eta(5\tau)^{-1}\eta(11\tau)^{-3}\eta(55\tau)^5$  \\ \hline
 37  & $\eta(\tau)^{-3}\eta(5\tau)^5\eta(11\tau)^3\eta(55\tau)^{-1}$    \\ \hline
 38 & $\eta(\tau)\eta(5\tau)^{-3}\eta(11\tau)^5\eta(55\tau)$  \\ \hline
39  & $\eta(\tau)^5\eta(5\tau)\eta(11\tau)\eta(55\tau)^{-3}$  \\ \hline
40  & $\eta(\tau)^{-3}\eta(5\tau)\eta(11\tau)\eta(55\tau)^5$   \\ \hline
\end{tabular}
\egroup
\end{table}

We conclude by describing in an algorithmic fashion the method we used to obtain Theorems \ref{N=35} and \ref{N=55}.  We have seen that both of these theorems, once discovered, can be proved in a straightforward manner.  However, how to find results like these may not be immediately apparent.  Our approach, which is inspired by work of Pathakjee, RosnBrick, and Yoong \cite{PRY}, utilizes results from Section \ref{section4} and has the potential to generate many new examples. We note that many of the steps require the aid of mathematical software to be practical.  We used SageMath \cite{sagemath}. \\

\noindent {\bf Step 1.}  Fix a semiprime $N=pq$ satisfying the conditions in Theorem \ref{mastersemi} with $k=2$ that is the conductor of an elliptic curve $E$.  By the Modularity Theorem (Theorem \ref{modularityTheorem}) we know that $E$ has an associated modular form $f(\tau)\in S_2(\G_0(N))$. \\
\noindent {\bf Step 2.}  Compute $d_N= \dim_\C S_2(\G_0(N))$. \\
\noindent {\bf Step 3.}  Compute all partitions of $(p+1)(q+1)/6$ into exactly four parts, and construct distinct rearrangements in order to get a complete list of all possible tuples $(v_1, v_{1/p}, v_{1/q}, v_N)\in \N^4$ satisfying
\begin{equation}\label{vsum}
v_1 +  v_{1/p} + v_{1/q} + v_{1/N} = \frac{(p+1)(q+1)}{6}. 
\end{equation} 
By Theorem \ref{sfvansum}, we know that any eta-quotient in $S_2(\G_0(N))$ must have orders of vanishing $v_1, v_{1/p}, v_{1/q}, v_{1/N} \geq 1$ that satisfy \eqref{vsum}. \\
\noindent {\bf Step 4.}  For each tuple $(v_1, v_{1/p}, v_{1/q}, v_N)$ obtained in Step 3, use Theorem \ref{1.65} to construct the following system of four equations in the four unknowns $(r_1, r_p, r_q, r_N)$
\begin{align*}
  24v_1 &= Nr_1+qr_p+pr_q+r_N \\
  24v_{1/p} &= qr_1 + Nr_p + r_q + pr_N \\
  24v_{1/q} &= pr_1 + r_p + Nr_q + qr_N \\
  24v_{1/N} &= r_1 + pr_p + qr_q + Nr_N,
\end{align*}
and solve for the unique solution $(r_1, r_p, r_q, r_N)\in \Q^4$. \\
\noindent {\bf Step 5.}  For each tuple $(r_1, r_p, r_q, r_N)$ from Step 4 that has integer entries, let 
\[
g(\tau)=\eta(\tau)^{r_1}\eta(p\tau)^{r_p}\eta(q\tau)^{r_q}\eta(N\tau)^{r_N}
\]
and use Theorems \ref{1.64} and \ref{1.65} to check whether $g(\tau) \in S_2(\G_0(N))$.  List  all such $g\in S_2(\G_0(N))$.  \\
\noindent {\bf Step 6.}   Construct a maximally sized linearly independent set of eta-quotients from the list in Step 5, using linear algebra.\\
\noindent {\bf Step 7.}  If the set from Step 6 has size $d_N$, then it forms a basis of  $S_2(\G_0(N))$.  In this case, compute the Sturm Bound from Theorem \ref{sturmBound} and write $f$ as a linear combination of the basis from Step 6.  If not, go to Step 8.\\
\noindent {\bf Step 8.}  Repeat Steps 2-6 for weights $2,4,6,\ldots$ until a weight  $k$ is found such that $S_k(\G_0(N))$ has a basis of eta-quotients, and $S_{k-2}(\G_0(N))$ contains an eta-quotient $a(\tau)$.  Compute the Sturm Bound from Theorem \ref{sturmBound} and write $f(\tau)a(\tau)$ as a linear combination of the basis.  Divide through by $a(\tau)$ to write $f(\tau)$ as a linear combination of eta-quotients in $M_2^!(\G_0(N))$.


\begin{thebibliography}{10}

\bibitem{AO}
Scott Ahlgren and Ken Ono.
\newblock Congruence properties for the partition function.
\newblock {\em Proc. Natl. Acad. Sci. USA}, 98(23):12882--12884, 2001.

\bibitem{ARJK2}
Allison Arnold-Roksandich, Kevin James, and Rodney Keaton.
\newblock Proceedings paper for {REU} project involving counting eta-quotients.
\newblock 2013.

\bibitem{ARJK}
Allison Arnold-Roksandich, Kevin James, and Rodney Keaton.
\newblock Counting eta-quotients of prime level.
\newblock {\em Involve}, 11(5):827--844, 2018.

\bibitem{Biagioli}
Anthony J.~F. Biagioli.
\newblock The construction of modular forms as products of transforms of the
  {D}edekind eta function.
\newblock {\em Acta Arith.}, 54(4):273--300, 1990.

\bibitem{Borcherds}
Richard~E Borcherds.
\newblock Monstrous moonshine and monstrous lie superalgebras.
\newblock {\em Inventiones mathematicae}, 109(1):405--444, 1992.

\bibitem{CN}
J.~H. Conway and S.~P. Norton.
\newblock Monstrous moonshine.
\newblock {\em Bull. London Math. Soc.}, 11(3):308--339, 1979.

\bibitem{DS}
Fred Diamond and Jerry Shurman.
\newblock {\em A first course in modular forms}, volume 228 of {\em Graduate
  Texts in Mathematics}.
\newblock Springer-Verlag, New York, 2005.

\bibitem{DKMK}
D.~Dummit, H.~Kisilevsky, and J.~McKay.
\newblock Multiplicative products of {$\eta$}-functions.
\newblock In {\em Finite groups---coming of age ({M}ontreal, {Q}ue., 1982)},
  volume~45 of {\em Contemp. Math.}, pages 89--98. Amer. Math. Soc.,
  Providence, RI, 1985.

\bibitem{GH}
Basil Gordon and Kim Hughes.
\newblock Multiplicative properties of {$\eta$}-products. {II}.
\newblock In {\em A tribute to {E}mil {G}rosswald: number theory and related
  analysis}, volume 143 of {\em Contemp. Math.}, pages 415--430. Amer. Math.
  Soc., Providence, RI, 1993.

\bibitem{Kilford}
L.~J.~P. Kilford.
\newblock Generating spaces of modular forms with {$\eta$}-quotients.
\newblock {\em JP J. Algebra Number Theory Appl.}, 8(2):213--226, 2007.

\bibitem{knopp}
Marvin~Isadore Knopp.
\newblock {\em Modular functions in analytic number theory}, volume 337.
\newblock American Mathematical Soc., 2008.

\bibitem{LO}
Robert~J. Lemke~Oliver.
\newblock Eta-quotients and theta functions.
\newblock {\em Adv. Math.}, 241:1--17, 2013.

\bibitem{Ligozat}
G.~Ligozat.
\newblock {\em Courbes modulaires de genre {$1$}}.
\newblock U.E.R. Math\'{e}matique, Universit\'{e} Paris XI, Orsay, 1974.
\newblock Publication Math\'{e}matique d'Orsay, No. 75 7411.

\bibitem{lmfdb}
The {LMFDB Collaboration}.
\newblock The l-functions and modular forms database.
\newblock \url{http://www.lmfdb.org}, 2013.
\newblock [Online; accessed 3 August 2018].

\bibitem{Martin}
Yves Martin.
\newblock Multiplicative {$\eta$}-quotients.
\newblock {\em Trans. Amer. Math. Soc.}, 348(12):4825--4856, 1996.

\bibitem{MO}
Yves Martin and Ken Ono.
\newblock Eta-quotients and elliptic curves.
\newblock {\em Proc. Amer. Math. Soc.}, 125(11):3169--3176, 1997.

\bibitem{MDG}
Maruti~Ram Murty, Michael Dewar, and Hester Graves.
\newblock {\em Problems in the theory of modular forms}.
\newblock Springer, 2015.

\bibitem{Newman}
Morris Newman.
\newblock Construction and application of a class of modular functions.
\newblock {\em Proc. London. Math. Soc. (3)}, 7:334--350, 1957.

\bibitem{Newman2}
Morris Newman.
\newblock Construction and application of a class of modular functions. {II}.
\newblock {\em Proc. London Math. Soc. (3)}, 9:373--387, 1959.

\bibitem{Ono}
Ken Ono.
\newblock Distribution of the partition function modulo {$m$}.
\newblock {\em Ann. of Math. (2)}, 151(1):293--307, 2000.

\bibitem{WebOf}
Ken Ono.
\newblock {\em The web of modularity: arithmetic of the coefficients of modular
  forms and {$q$}-series}, volume 102 of {\em CBMS Regional Conference Series
  in Mathematics}.
\newblock Published for the Conference Board of the Mathematical Sciences,
  Washington, DC; by the American Mathematical Society, Providence, RI, 2004.

\bibitem{PRY}
David Pathakjee, Zef RosnBrick, and Eugene Yoong.
\newblock Elliptic curves, eta-quotients and hypergeometric functions.
\newblock {\em Involve}, 5(1):1--8, 2012.

\bibitem{RW}
Jeremy Rouse and John~J. Webb.
\newblock On spaces of modular forms spanned by eta-quotients.
\newblock {\em Adv. Math.}, 272:200--224, 2015.

\bibitem{TaylorWiles}
Richard Taylor and Andrew Wiles.
\newblock Ring-theoretic properties of certain {H}ecke algebras.
\newblock {\em Ann. of Math. (2)}, 141(3):553--572, 1995.

\bibitem{sagemath}
{The Sage Developers}.
\newblock {\em {S}ageMath, the {S}age {M}athematics {S}oftware {S}ystem
  ({V}ersion v.8.3)}, 2018.
\newblock {\tt http://www.sagemath.org}.

\bibitem{Wiles}
Andrew Wiles.
\newblock Modular elliptic curves and {F}ermat's last theorem.
\newblock {\em Ann. of Math. (2)}, 141(3):443--551, 1995.

\end{thebibliography}
\end{document}